\newtheorem{assumption}{Assumption}	
\newtheorem{proposition}{Proposition}
\theoremstyle{remark}
\newtheorem{remark}{Remark}	
\begin{document}

\title{Optimal Power Allocation in Battery/Supercapacitor Electric Vehicles using Convex Optimization}

\author{Sebastian~East, \and Mark~Cannon% <-this % stops a space
%\thanks{Copyright \copyright~2015 IEEE. Personal use of this material is permitted. However, permission to use this material for any other purposes must be obtained from the IEEE by sending a request to pubs-permissions@ieee.org.}%
\thanks{S.~East and M.~Cannon are with the Department
of Engineering Science, University of Oxford, Oxford, OX1 3PJ. email: mark.cannon@eng.ox.ac.uk.}% <-this % stops a space
}

% The paper headers
\markboth{}%
{}

% make the title area
\maketitle

\begin{abstract}
This paper presents a framework for optimizing the power allocation between a battery and supercapacitor in an electric vehicle energy storage system. A convex optimal control formulation is proposed that minimizes total energy consumption whilst enforcing hard constraints on power output and total energy stored in the battery and supercapacitor. An alternating direction method of multipliers (ADMM) algorithm is proposed, for which computational and memory requirements scale linearly with the length of the prediction horizon (and can be reduced using parallel processing). The optimal controller is compared with a low-pass filter against an all-battery baseline in numerical simulations, where it is shown to provide significant improvement in battery degradation (inferred through reductions of 71.4\% in peak battery power, 21.0\% in root-mean-squared battery power, and 13.7\% in battery throughput), and a reduction of 5.7\% in energy consumption. It is also shown that the ADMM algorithm can solve the optimization problem in a fraction of a second for prediction horizons of more than 15 minutes, and is therefore a promising candidate for online receding-horizon control.
\end{abstract}

% Note that keywords are not normally used for peerreview papers.
\begin{IEEEkeywords}
Energy management, electric vehicle, supercapacitor, convex optimization, alternating direction method of multipliers.
\end{IEEEkeywords}

% For peer review papers, you can put extra information on the cover
% page as needed:
% \ifCLASSOPTIONpeerreview
% \begin{center} \bfseries EDICS Category: 3-BBND \end{center}
% \fi
%
% For peerreview papers, this IEEEtran command inserts a page break and
% creates the second title. It will be ignored for other modes.
\IEEEpeerreviewmaketitle

% make bibtex display author names in consecutive entries with same authors
\bstctlcite{IEEEexample:BSTcontrol}

\section{Introduction}
\IEEEPARstart{E}{lectric} vehicles have surged in popularity in the past decade, and are expected to achieve a passenger vehicle market share of 10\% by 2024 \cite[p.3]{Deloitte2019}. One of the challenges facing large scale adoption of electric vehicles, however, is the high cost of lithium-ion batteries, which can reach 50\% of the total cost of a vehicle \cite[\S 3]{Fries2017}. This issue is exacerbated by the batteries' low power density and cycle life ($\leq$2000 W/kg and $\sim$2000 cycles \cite[Table B1]{Zakeri2015}), which imply that the battery may need to be sized above its total energy requirement in order to meet its power requirement, and also require replacement at a higher frequency than other powertrain components.

A hybrid energy storage system consisting of a conventional battery and a supercapacitor can be used to reduce the effect of these limitations. The operational principle is that short term fluctuations and large spikes in power demand are delivered by the supercapacitor whilst the battery delivers power at a reduced and more constant level (see \cite{Ju2016} for a review of battery/supercapacitor vehicle architectures). Supercapacitors are appropriate for this architecture as they have a power density of up to 23,500 W/kg and an effectively infinite cycle life \cite[Table B1]{Zakeri2015}).

One of the factors that determines the effectiveness of the hybrid storage system in reducing battery usage is the control system used to determine the power allocation between the battery and supercapacitor (see \cite[\S 4]{Tie2013} for a comprehensive review of control methods). Rule-based control algorithms are simple and real-time implementable. A common rule-based approach uses a low pass filter to allocate the low frequency components of the power demand signal to the battery and the high frequency components to the supercapacitor \cite{Allegre2013,Nguyen2018,Dusmez2014}. A drawback of these approaches is that they do not account for hardware constraints such as electrical storage capacity, so the vehicle will be forced to allocate charge/discharge demands to the battery during periods when the supercapacitor is completely charged/discharged. Furthermore, it is unlikely that a rule-based controller is optimal (in the sense of minimizing energy consumption or battery degradation) even when hardware limits are not active.

Optimization-based control strategies aim to achieve the best possible power allocation for a given performance criterion. This approach typically has greater computational requirements than rule-based heuristics and assumes knowledge of the predicted power demand over a future horizon, but it is potentially much more effective. Dynamic programming (DP) can be used to determine the globally optimal control inputs for arbitrary cost functions, system dynamics, and hardware constraints. However DP is computationally inefficient, and can typically only be used offline as a benchmark \cite{Masih-Tehrani2013,Santucci2014,Nguyen2018}. A possible use case of DP is to generate target data for tuning real-time control methods; a rule-based controller was tuned to DP results in \cite{Song2015}, and an artificial neural network was trained on DP results in \cite{Shen2015}. A real-time DP control strategy was obtained in \cite{Romaus2010}, where the optimal state-feedback law obtained using stochastic DP was saved as a look-up table, but this approach has a potentially prohibitive memory requirement (12Gb).

Pontryagin's Minimum Principle was investigated in \cite{Nguyen2018} and \cite{Zheng2018} to develop a real-time optimization-based controller, but these problem formulations considered very limited hardware constraints in the optimal control formulation. Hard constraint satisfaction can be addressed using model predictive control (MPC). In \cite{Wang2019,Santucci2014}, and \cite{Hredzak2014} the hybrid energy storage system was modelled as a linear system so that the MPC optimization problem reduced to a quadratic program. Convex quadratic programs can be solved efficiently and reliably using widely available software, however the linear approximation of the system dynamics renders the obtained control inputs suboptimal in general.

In this paper, the limitations of the aforementioned optimization-based control approaches are addressed using convex optimization. This approach allows the use of nonlinear models of powertrain losses and more general system dynamics than linear-quadratic MPC, whilst guaranteeing constraint satisfaction. Furthermore, a convex formulation generally permits a solution in polynomial time \cite{NesterovBook}, and guarantees that a locally optimal solution is also globally optimal. Previous studies have also investigated convex optimization for this application: in \cite{Choi2014} a real-time convex optimization based controller was presented, although constraints on battery power and energy were not considered, and only a one-step prediction horizon was used. Also, a primal-dual interior point algorithm for convex optimization was implemented in \cite{Wang2017}, but this did not optimize the power split in regenerative mode, and the algorithm was only tested on a single drive cycle (49 cycles are used in this paper), and the computational performance of the algorithm was not reported.

\subsection{Contributions}

This paper makes three novel contributions to the electric vehicle energy management problem using convex optimization.
\begin{enumerate}
\item A general convex optimization framework is presented for optimal electric vehicle power allocation. The framework considers losses in each of the battery, supercapacitor, and powertrain, and enforces hard constraints on instantaneous power delivered by the battery, supercapacitor, and powertrain, and total energy stored in both that battery and supercapacitor.
\item A computationally efficient alternating direction method of multipliers (ADMM) \cite{Wang2014} algorithm is presented for the solution of the convex optimization problem. The algorithm is designed to exploit the separability of the problem, and the iteration and memory costs are $\mathcal{O}(T)$  (where $T$ is the length of the prediction horizon). If parallel processing is available, the computation of a subset of the variable updates reduces to $\mathcal{O}(1)$.
\item A set of numerical experiments  are presented in which the proposed ADMM algorithm is compared with a low-pass filter against an all-battery baseline on 49 examples of real driver behaviour. It is demonstrated that the convex formulation significantly reduces several metrics of battery use (Root-mean-square (RMS) battery power, peak battery power, total power throughput, and energy consumption) relative to the low-pass filter, and that the ADMM algorithm solves the convex optimization problem in an average of 0.38s (even for prediction horizons of up to 1003 samples), compared to an average of 63s using general purpose convex optimization software CVX \cite{CVX,gb08}.
\end{enumerate}

The work presented in this paper builds on the authors' previous work developing fast optimization algorithms for energy management in plug-in hybrid electric vehicles (PHEVs). A similar ADMM algorithm was proposed in \cite{EastCDC} and subsequently developed in \cite{EastTCST} and \cite{EastCSL}, but the structure of this problem is different and all of the work presented here is new.

\subsection{Notation}
Vectors of ones and zeros are denoted $\mathbf{1}$ and $\mathbf{0}$ respectively (in all cases the dimension of the vector can be inferred and is not specified). Inequalities are considered element-wise in the context of vectors. The set of integers between upper and lower bounds is denoted $\{ \underline{z}, \dots , \overline{z} \} := \{ z \in \mathbb{Z}: \underline{z} \leq z \leq \overline{z} \}$. 
The nonnegative (positive) real numbers are denoted $\mathbb{R}_+ (\mathbb{R}_{++})$.

\section{Problem Formulation}

\subsection{Variables}

\tikzstyle{input} = [coordinate]
\tikzstyle{sum} = [draw, fill=black!10, circle, node distance=0.5cm]
\tikzstyle{block} = [draw, fill=black!10, rectangle, 
    minimum height=3em, minimum width=5em]

\begin{figure}
\begin{tikzpicture}[auto, node distance=1.5cm,>=latex]
\node [name = input] {$u_t$};
\node [block, name=battery, right of=input, node distance = 2cm, text width=2cm, align=center] {Battery Losses $g_t$};
\node [sum, right of=battery, node distance = 2cm, name=sum] {};
\node [block, right of = sum, name=motor, node distance=1.75cm, text width=1.6cm, align=center] {Powertrain Losses $h_t$};
\node [block, below of=battery, node distance=1.5cm, name=supercaploss, text width=2.1cm, align=center] {Supercapacitor Losses $f_t$};
\node[left of=supercaploss, node distance=2cm, name=supercapinput] {$v_t$};
%\node [below of =sum, node distance=1cm, name=supercap] {$v_t$};
%\node [block, right of=motor, node distance = 2.5cm, name=powertrainloss, text width=2cm, align=center] {Powertrain Losses};
\node [sum, right of=motor, node distance = 1.75cm, name=sum2] {};
\node [below of =sum2, node distance=1cm, name=brake] {$b_t$};
\node [right of =sum2, node distance=1cm, name=drive] {$d_t$};
\draw [<->] (input) -- (battery);
\draw [<->] (battery) -- (sum) node [pos=0, above right] {$\breve{u}_t$};
\draw [<->] (motor) -- (sum) node [midway, above] {};
%\draw [<->] (supercaploss) -- (sum) node [midway, right] {$\pcap$};
%\draw [<->] (sum) -- (supercap);
%\draw [<->] (supercap) -- (supercaploss);
%\draw [<->] (motor) -- (powertrainloss)  node [midway, above] {};
\draw [<->] (supercaploss) -| (sum) node [pos=0, above right] {$\breve{v}_t$};
\draw [<->] (supercapinput) -- (supercaploss);
\draw [<->] (motor) -- (sum2) node [midway, above] {$m_t$};
\draw [->] (sum2) -- (brake);
\draw [<->] (sum2) -- (drive);
\end{tikzpicture}
\caption{Power-flow diagram for electric vehicle with hybrid energy storage system.}
\label{figure::powertrain_diagram}
\end{figure}
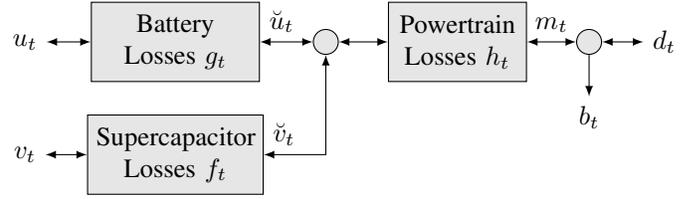

Figure \ref{figure::powertrain_diagram} shows a simplified diagram of the power flows in the powertrain used to formulate the energy management problem, where it is assumed that the storage system is an active architecture in which the power delivered by the battery and supercapacitor can be controlled individually (a discussion of possible power electronics is presented in \cite{Ju2016}). The powertrain is modelled in discrete time with an assumed sampling interval of $1 \,$s, so that $t \in \{0, \dots , T \}$, where $T \in \{1, \dots , \infty \}$ is the duration of the journey under consideration (although the methods presented here can be readily extended to an arbitrary sampling interval). The variable $u := (u_0,\dots,u_{T-1}) \in \mathbb{R}^T$ represents the rate of change of the internal energy of the battery, and by assuming that the power values are constant between sampling intervals, the energy stored in the battery, $x := (x_1,\dots,x_T) \in \mathbb{R}^T$, is given by
$$
x_t := x_0 - \sum_{i=0}^{t-1} u_i \quad \text{for} \quad t \in \{1, \dots, T\},
$$
where $x_0\in \mathbb{R}$ is the battery's internal energy at the start of the journey. Similarly, the variable $v := (v_0, \dots, v_{T-1}) \in \mathbb{R}^T$ represents the rate of change of internal energy in the supercapacitor, so the energy stored in the supercapacitor, $y := (y_1, \dots, y_T) \in \mathbb{R}^T$, is given by
$$
y_t:= y_0 - \sum_{i=0}^{t-1} v_i \quad \textrm{for} \quad t \in \{1, \dots , T \},
$$
where $y_0 \in \mathbb{R}$ is the supercapacitor's internal energy at the start of the journey. The variable $\breve{u} :=(\breve{u}_0,\dots, \breve{u}_{T-1})\in \mathbb{R}^T$ represents the electrical power delivered by the battery after losses, $\breve{v} := (\breve{v}_0, \dots , \breve{v}_{T-1} ) \in \mathbb{R}^T$ is the electrical power delivered by the supercapacitor after losses, $m:=(m_0,\dots m_{T-1}) \in \mathbb{R}^T$ is the mechanical power delivered by the powertrain to the wheels, $b:=(b_0, \dots, b_{T-1}) \in \mathbb{R}_{+}^T$ is the mechanical braking power, and $d:=(d_0, \dots, d_{T-1})$ represents the power demanded by the driver throughout the journey. Methods for predicting future driver behaviour are still an open problem \cite{Zhou2019} that is outside of the scope of this paper; it is assumed here that an exact prediction of $d$ is available to the powertrain controller. The limitations introduced by an inaccurate prediction are discussed in Section \ref{section::numerical_experiments}.

\subsection{Loss Functions}

Battery losses are represented by the time-varying function $g_t$, while powertrain losses are modelled with the time varying function $h_t$, and the supercapacitor losses are modelled with the time varying function $f_t$. It is assumed that the losses in the battery, supercapacitor, and powertrain are independent of states such as temperature and state of charge (this assumption typically introduces minor approximation errors, and it is common in similar energy management problems, e.g. \cite{Hu2019,MURGOVSKI2012106,Nesch2014}). Under the above assumptions, the electrical power delivered by the battery is given by $\breve{u}_t := g_t ( u_t) \ \forall t \in \{0, \dots , T-1 \}$ and the electrical power delivered by the supercapacitor is $\breve{v}_t := f_t ( v_t) \ \forall t \in \{0, \dots , T-1 \}$. These are then combined additively and delivered to the powertrain so that $m_t := h_t ( g_t ( u_t ) + f_t ( v_t ) ) \ \forall t \in \{0, \dots , T-1 \}$. Finally, the power delivered by the powertrain is combined additively with the power from the brakes to meet the driver demand power,
\begin{equation}\label{equation::dynamic_constraint}
d_t := b_t + h_t ( g_t (u _t ) + f_t ( v_t ) )  \quad \forall t \in \{0, \dots , T-1 \}.
\end{equation}
\begin{assumption}\label{assumption::increasing}
For all $t \in \{0, \dots , T-1 \}$, $f_t (\cdot)$, $g_t(\cdot)$, and $h_t(\cdot)$ are strictly increasing functions of their arguments.
\end{assumption}
\begin{assumption}\label{assumption::concave}
For all $t \in \{0, \dots , T-1 \}$, $f_t(\cdot)$ and $g_t(\cdot)$ are concave functions of their arguments.
\end{assumption}
Assumptions \ref{assumption::increasing} and \ref{assumption::concave} are justified by a physical interpretation of a loss function: it would be expected that an increase in output power would require an increase in input power, which implies Assumption \ref{assumption::increasing}, and it would be expected that the losses would increase as the magnitude of the input/output power increases, which implies Assumption \ref{assumption::concave}. In order for the modelled system to be valid, it is also required that $f_t(x) \leq x$, $g_t(x) \leq x$, and $h_t(x) \leq x$ $\forall x,t$ (i.e. the system cannot create energy), but this condition is not explicitly required to obtain a convex formulation.

\begin{assumption}\label{assumption::surjective}
For all $t \in \{0, \dots , T-1 \}$, $f_t(\cdot)$, $g_t(\cdot)$, and $h_t(\cdot)$ are surjective functions.
\end{assumption}

Assumption \ref{assumption::surjective} is a technicality required for Assumption \ref{assumption::increasing} to imply that $f_t(\cdot)$, $g_t(\cdot)$, and $h_t(\cdot)$ are bijective $\forall t \in \{0, \dots , T-1 \}$, which therefore implies that $f_t^{-1}(\cdot)$, $g^{-1}_t(\cdot)$, and $h_t^{-1}(\cdot)$ exist $\forall t \in \{0, \dots , T-1 \}$.

Note that whilst the focus of this paper is on battery/supercapacitor electric vehicles, the proposed approach can be used for any hybrid energy storage system in which the loss functions satisfy Assumptions \ref{assumption::increasing}-\ref{assumption::surjective}.

\subsection{Optimal Control Problem}

The power delivered by the battery, power delivered by the supercapacitor, total electrical power delivered to the powertrain, total battery energy, and total supercapacitor energy are all subject to upper and lower bounds, and the braking power is constrained to be nonpositive:
\begin{gather*}
\underline{u}_t \leq u_t \leq \overline{u}_t, \quad \underline{v}_t \leq v_t \leq \overline{v}_t, \quad \underline{e}_t \leq g_t ( u_t ) + f_t ( v_t ) \leq \overline{e}_t, \\
\underline{x}_t \leq x_t \leq \overline{x}_t, \quad \underline{y}_t \leq y_t \leq \overline{y}_t, \quad b_t \leq 0,
\end{gather*}
$\forall t \in \{0,\dots,T-1\}.$ The cost function
\begin{equation*}\label{equation::cost_function}
\sum_{t=0}^{T-1} \left[ u_t + v_t - h_t(g_t (u_t) + f_t ( v_t ) ) - b_t \right]
\end{equation*}
is used to represent the total losses in the system (although any convex function of $u$, $v$, $x$, and/or $y$ can be used in the proposed framework). This cost represents the sum over $T$ steps of the braking energy, $-b_t$, plus the difference between the combined energy delivered by the battery and supercapacitor, $(u_t + v_t)$, and the total energy delivered by the powertrain, $h_t(g_t(u_t) + f_t(v_t))$. 

Therefore, the optimal control sequences, denoted $(u,v,b)^\star$, are obtained from the minimizing argument of
\begin{equation}\label{equation::optimal_control_problem}
\begin{aligned}
\underset{(u,v,b)}{\textrm{min}} \ & \sum_{t=0}^{T-1} \left[ u_t + v_t - h_t(g_t (u_t) + f_t(v_t)) - b_t \right] \\
\textrm{s.t.} \ & \begin{rcases}
d_t = b_t + h_t ( g_t (u _t ) + f_t ( v_t ) ) \\
b_t \leq 0 \\
 \underline{u}_t \leq u_t \leq \overline{u}_t \\
\underline{v}_t \leq v_t \leq \overline{v}_t \\
\underline{e}_t \leq g_t(u_t) + f_t ( v_t ) \leq \overline{e}_t \\
x_{t+1} = x_0 - \sum_{i=0}^t u_i \\
\underline{x}_{t+1} \leq x_{t+1} \leq \overline{x}_{t+1} \\
y_{t+1} = y_0 - \sum_{i=0}^t v_i \\
\underline{y}_{t+1} \leq y_{t+1} \leq \overline{y}_{t+1} \end{rcases} \forall t \in \{ 0, \dots , T-1 \}.
\end{aligned}
\end{equation}
Problem (\ref{equation::optimal_control_problem}) is generally nonconvex if any of $f_t(\cdot)$, $g_t(\cdot)$, or $h_t(\cdot)$ are nonlinear for any $t \in \{0,\dots,T-1\}$.

\subsection{Convex Formulation}

The constraint $b_t \leq 0 \ \forall t \in \{0, \dots, T-1 \}$ can be combined with (\ref{equation::dynamic_constraint}) to obtain $d_t \leq h_t(g_t(u_t) + f_t( v_t ))$, which under Assumption \ref{assumption::increasing} is equivalent to
$$
h_t^{-1}(d_t) \leq g_t(u_t) + f_t (v_t ).
$$
This is combined with the constraint $\underline{e}_t \leq g_t (u_t ) + f_t ( v_t )$ as
$$
\hat{e}_t \leq g_t (u_t ) + f_t ( v_t )
$$
where $\hat{e}_t := \max \{ \underline{e}_t, h_t^{-1}(d_t) \}$. The set $\{(u_t,v_t) \in \mathbb{R}^2: \hat{e}_t \leq g_t (u_t ) + f_t ( v_t ) \}$ is convex under Assumption \ref{assumption::concave}.  The constraint $g_t(u_t ) + f_t (v_t ) \leq \overline{e}_t$ is in general nonconvex, and is linearly approximated by
\begin{gather*}\label{equation::linearization}
\hat{g}_t(u_t) + \hat{f}_t ( v_t ) \leq \overline{e}_t , \quad \hat{g}_t(u_t) := g_t(\hat{u}_t) + g'_t(\hat{u}_t)(u_t - \hat{u}_t), \\
\hat{f}_t(v_t) := f_t(\hat{v}_t) + f'_t(\hat{v}_t)(v_t - \hat{v}_t),
\end{gather*}
where $g'_t(\cdot )$ and $f'_t(\cdot) $ are the derivatives of $g_t(\cdot)$ and $f_t(\cdot)$\footnote{If $f_t(\cdot)$ or $g_t(\cdot)$ are nondifferentiable, then any non-zero element from their sub-gradients can be used in place of $f_t'(\cdot)$ and $g_t'(\cdot)$.}, and $\hat{u}_t \in \mathbb{R}$ and $\hat{v}_t \in \mathbb{R}$ are fixed linearization points\footnote{The choice of $\hat{u}_t$ and $\hat{v}_t$ is discussed further in Remark \ref{remark::remark1} in Appendix \ref{appendix::combined_u_v_algorithm}.} chosen such that $g_t(\hat{u}_t) \neq 0$ and $f_t (\hat{v}_t ) \neq 0$.
The concavity of $g_t$ and $f_t$ (Assumption \ref{assumption::concave}) implies that
$\{(v_t,u_t) \in \mathbb{R}^2 : \hat{g}_t (u_t ) + \hat{f}_t (v_t ) \leq \overline{e}_t \} \subseteq \{(v_t,u_t) \in \mathbb{R}^2 : {g}_t (u_t ) + f_t ( v_t ) \leq \overline{e}_t \}$, so the linear approximation guarantees that the original constraint is enforced, and the set $\{(v_t,u_t) \in \mathbb{R}^2 : \hat{g}_t (u_t ) + \hat{f}_t (v_t ) \leq \overline{e}_t \} $ is convex. This approximation could introduce some conservatism to the problem formulation. However, $g_t(u_t ) + f_t ( v_t ) \leq \overline{e}_t$ is the upper bound on powertrain power (typically imposed by the torque limits of the powertrain components), and, assuming that the driver demand power cannot exceed this upper limit, the constraint $g_t(u_t)+f_t(v_t) \leq \overline{e}_t$ can only be active if the brakes are active while the battery and supercapacitor deliver power in excess of the demand power. Given that the cost function penalizes energy loss, it is unlikely that this constraint is active at the optimal solution of (\ref{equation::optimal_control_problem}), although it is technically possible in particular operating conditions  (the issue is discussed further in Appendix \ref{appendix::analysis_of_optimization_problem}). 

Using (\ref{equation::dynamic_constraint}), the objective of (\ref{equation::optimal_control_problem}) can be simplified:
$$
u_t + v_t - h_t(g_t (u_t) + f_t ( v_t ) ) - b_t = u_t + v_t - d_t,
$$
where $d$ is independent of the decision variables. Problem (\ref{equation::optimal_control_problem}) can therefore be approximated by the convex optimization problem
\begin{equation}\label{equation::optimal_control_problem_convex}
\begin{aligned}
\underset{(u,v)}{\textrm{min}} \ & \mathbf{1}^\top (u + v) \\
\textrm{s.t.} \ &  x = \mathbf{1}x_0 - \Psi u, \quad x \in \mathcal{X}, \\
& y = \mathbf{1}y_0 - \Psi v, \quad y \in \mathcal{Y}, \\
& \begin{rcases} (u_t,v_t) \in \mathcal{C}_t \\
 u_t \in \mathcal{U}_t \\
 v_t \in \mathcal{V}_t \end{rcases} \ \forall t \in \{ 0, \dots , T-1 \},
\end{aligned}
\end{equation}
where $\Psi$ is a $T \times T$ lower triangular matrix of ones, and
\begin{align*}
\mathcal{C}_t := \ & \{(u_t,v_t) \in \mathbb{R}^{2}: \hat{e}_t \leq g_t(u_t) + f_t (v_t ), \   \\
& \quad \hat{g}_t (u_t) + \hat{f}_t ( v_t ) \leq \overline{e}_t \}, \\
\mathcal{U}_t := \ & \{ u_t \in \mathbb{R} :\underline{u}_t \leq u_t, \leq \overline{u}_t \}, \\
\mathcal{V}_t := \ & \{ v_t \in \mathbb{R} :\underline{v}_t \leq v_t, \leq \overline{v}_t  \}, \\
\mathcal{X} := \ & \{ x \in \mathbb{R}^T :\underline{x}_t \leq x_t, \leq \overline{x}_t \ \forall t \in \{1,\dots,T\} \}, \\
\mathcal{Y} := \ & \{ y \in \mathbb{R}^T :\underline{y}_t \leq y_t, \leq \overline{y}_t \ \forall t \in \{1,\dots,T\} \}.
\end{align*}
The optimal braking power sequence, $b^\star$, is then obtained from $b_t^\star = d_t - h_t(g_t(u_t^\star ) + f_t ( v_t^\star ) ) \ \forall t \in \{0,\dots , T-1 \}$, where $u^\star$ and $v^\star$ are the minimizing arguments of (\ref{equation::optimal_control_problem_convex}).

\section{Alternating Direction Method of Multipliers}\label{section::ADMM}

Problem (\ref{equation::optimal_control_problem_convex}) can be solved using general purpose convex optimization software (e.g. CVX), but these tools are typically computationally intensive. In this section an ADMM algorithm is proposed that is tailored to the structure of (\ref{equation::optimal_control_problem_convex}).  

\subsection{Algorithm}\label{section::algorithm}
Problem (\ref{equation::optimal_control_problem_convex}) is equivalent to the equality constrained problem
\begin{equation}\label{equation::optimal_control_problem_convex_2}
\begin{aligned}
\underset{(u,v)}{\textrm{min}} \ & \mathbf{1}^\top (u + v) + \sum_{t=0}^{T-1}  \left[ \mathcal{I}_{\mathcal{C}_t} (u_t, v_t) + \mathcal{I}_{\mathcal{U}_t}(u_t) + \mathcal{I}_{\mathcal{V}_t}(v_t) \right] \\
& + \mathcal{I}_{\mathcal{X}}(x) + \mathcal{I}_{\mathcal{Y}}(y), \\
\textrm{s.t.} \ & u = \zeta, \quad  v = \eta, \quad x = \mathbf{1}x_0 - \Psi \zeta, \quad  y = \mathbf{1}y_0 - \Psi \eta,
\end{aligned}
\end{equation}
where $\zeta \in \mathbb{R}^T$ and $\eta \in \mathbb{R}^T$ are vectors of dummy variables, and the indicator functions are defined for a given set $\mathcal{S}$ by
$$
\mathcal{I}_\mathcal{S} (s) := \begin{cases} 0 & s \in \mathcal{S}, \\ \infty & \textrm{otherwise.} \end{cases}
$$
Problem (\ref{equation::optimal_control_problem_convex_2}) is in turn the equivalent of
\begin{equation}\label{equation::ADMM_standard_form}
\begin{aligned}
\underset{(u,v)}{\textrm{argmin}} \ \tilde{f}(\tilde{u}) \quad \textrm{s.t.} \ A \tilde{u} + B \tilde{x} = c,
\end{aligned}
\end{equation}
where
\begin{gather*}
\tilde{u}:=(u,v, x, y), \quad \tilde{x}:=(\zeta, \eta), \\
\begin{split}
\tilde{f}(\tilde{u}) := \mathbf{1}^\top (u + v) + \sum_{t=1}^{T-1} [ \mathcal{I}_{\mathcal{C}_t}(u_t, v_t ) & +  \mathcal{I}_{\mathcal{U}_t}(u_t) + \mathcal{I}_{\mathcal{V}_t}(v_t)] \\
& + \mathcal{I}_\mathcal{X}(x) + \mathcal{I}_\mathcal{Y}(y), 
\end{split}
\\
A := \left[ \begin{smallmatrix} I \\  & I \\ & & I \\ & & & I \end{smallmatrix} \right], \quad B:= \left[ \begin{smallmatrix} -I \\ & -I \\  \Psi \\  & \Psi \end{smallmatrix} \right], \quad c:= (\mathbf{0},\mathbf{0},\mathbf{1}x_0, \mathbf{1}y_0 ).
\end{gather*}
We define the augmented Lagrangian function for (\ref{equation::ADMM_standard_form}) as
$$
\mathcal{L}(\tilde{u},\tilde{x},\lambda ) := \tilde{f}(\tilde{u} ) + \frac{1}{2} \|A \tilde{u} + B \tilde{x} - c + \lambda \|^2_\rho 
$$
with
\begin{gather*}
\| x \|^2_\rho:= x^\top \rho x, \quad \rho := \textrm{diag} (\rho_1 \textbf{1}, \rho_2 \textbf{1}, \rho_3 \textbf{1}, \rho_4 \textbf{1}), \\
\quad \lambda := (\lambda_1,\lambda_2,\lambda_3,\lambda_4) .
% \\
% \rho_i \in \mathbb{R}_+ \ \forall i \in \{1,2,3,4 \}, \quad \lambda_i \in \mathbb{R}^T \ \forall i \in \{1,2,3,4 \}.
\end{gather*}
Here $\lambda_1,\lambda_2,\lambda_3,\lambda_4\in\mathbb{R}^T$ are Lagrange multipliers for the constraints in~(\ref{equation::optimal_control_problem_convex_2}), and $\rho_1,\rho_2,\rho_3,\rho_4 \in \mathbb{R}_{++}$ are design parameters (the choice of $\rho$ is discussed in Section~\ref{subsection::parameters}).
The ADMM iteration is defined by
\begin{subequations}\label{equation::ADMM_iteration}
\begin{align}
\tilde{u}^{(j+1)} & := \underset{\tilde{u}}{\textrm{argmin}} \ \mathcal{L}(\tilde{u}, \tilde{x}^j, \lambda^j), \label{subequation::u_update} \\
\tilde{x}^{(j+1)} & := \underset{\tilde{x}}{\textrm{argmin}} \ \mathcal{L}(\tilde{u}^{(j+1)}, \tilde{x}, \lambda^j), \label{subequation::x_update} \\
\lambda^{(j+1)} & := \lambda^j + A \tilde{u}^{(j+1)} + B \tilde{x}^{(j+1)} - c, \label{subequation::lambda_update}
\end{align}
\end{subequations}
and in \cite{Wang2014} it was proved that (\ref{equation::ADMM_iteration}) will converge to the solution of (\ref{equation::ADMM_standard_form}) as the residuals defined by
\begin{align*}
r^{(j+1)} & := A \hat{u}^{(j+1)} + B \hat{x}^{(j+1)} - c\\
s^{(j+1)} & := A^\top \rho B ( \hat{x}^{(j+1)} - \hat{x}^{(j)} )
\end{align*}
necessarily converge to zero (the proof is presented for a positive scalar value of $\rho$, but is trivially extended to a fixed positive diagonal matrix as presented here). The algorithm is initialized with the values 
$$
\hat{u}^{(0)} = \mathbf{0}, \quad \hat{x}^{(0)} = \mathbf{0}, \quad \lambda^{(0)} = \mathbf{0},
$$
and terminated on satisfaction of the criterion
\begin{equation}\label{equation::termination_criterion}
\max \{ \|r^{(j+1)} \|, \|s^{(j+1)} \| \} \leq \epsilon ,
\end{equation}
where $\epsilon \in \mathbb{R}_{++}$ is a pre-determined convergence threshold.

\subsection{Variable Updates \& Algorithm Complexity}
Update (\ref{subequation::u_update}) is equivalent to
\begin{subequations}
\begin{align}
(u_t, v_t)^{(j+1)} & := \underset{(u_t ,v_t)}{\textrm{argmin}} \Big[ \mathcal{I}_{\mathcal{C}_t} (u_t, v_t) + \mathcal{I}_{\mathcal{U}_t}(u_t) + \mathcal{I}_{\mathcal{V}_t}(v_t) + \notag \\
& \ \frac{\rho_1}{2} (u_t - \zeta_t^{(j)} + \lambda_{1,t}^{(j)})^2 + \frac{\rho_2}{2} ( v_t - \eta_t^{(j)} + \lambda_{2,t}^{(j)})^2  \Big] \label{subequation::u_v_update} \\
x^{(j+1)} & := \Pi_\mathcal{X} \left[ \textbf{1} x_0 - \Psi \zeta^{(j)} - \lambda_3^{(j)} \right], \label{subequation::x_update_2} \\
y^{(j+1)} & := \Pi_\mathcal{Y} \left[ \textbf{1} y_0 - \Psi \eta^{(j)} - \lambda_4^{(j)} \right], 
\end{align}
\end{subequations}
where the projection onto a set $\mathcal{S}$ is given by $\Pi_{\mathcal{S}} (s) := \textrm{argmin}_{\hat{s} \in \mathcal{S}} \| s - \hat{s} \|_2$. The combined $(u_t,v_t)$ update in (\ref{subequation::u_v_update}) is a convex optimization problem subject to inequality constraints, and a method is presented in Appendix \ref{appendix::combined_u_v_algorithm} for solving this problem from a finite set of candidate solutions, thereby avoiding the use of a general purpose inequality constrained convex optimization algorithm (e.g. interior-point algorithm), which could increase the computational complexity of the ADMM algorithm as a whole. The update for $(u,v)$ is separable w.r.t.\ each element $(u_t,v_t)$, and therefore its computational complexity scales linearly with $T$ when each update is performed sequentially for $t \in \{0, \dots , T-1 \}$, or is independent of $T$ when they are performed in parallel. The computation of the argument of the projection in (\ref{subequation::x_update_2}) scales linearly with $T$ because multiplication by $\Psi$ is equivalent to a cumulative sum (and has no memory requirement). The projection $\Pi_\mathcal{X}$ can then be performed element-wise, so (\ref{subequation::x_update_2}) scales linearly with $T$ overall, and the same is true for $y^{(j+1)}$.

Update (\ref{subequation::x_update}) is equivalent to
\begin{subequations}
\begin{align}
\zeta^{(j+1)} & := (\rho_1 I + \rho_3 \Psi^\top \Psi )^{-1} \Big[ \rho_1 ( u^{(j+1)} + \lambda_1^{(j)} ) \notag \\
& \quad \quad \quad - \rho_3\Psi^\top ( x^{(j+1)} - \mathbf{1} x_0 + \lambda_3^{(j)}) \Big] \label{subequation::zeta_update} \\
\eta^{(j+1)} & := (\rho_2 I + \rho_4 \Psi^\top \Psi )^{-1} \Big[ \rho_2 ( v^{(j+1)} + \lambda_2^{(j)}) \notag \\
& \quad \quad \quad - \rho_4 \Psi^\top (y^{(j+1)} -\mathbf{1} y_0 + \lambda_4^{(j)} ) \Big] \label{subequation::eta_update}
\end{align}
\end{subequations}
where equations (\ref{subequation::zeta_update}) and (\ref{subequation::eta_update}) are the solutions of the general system of linear equations $(kI + \Psi^\top \Psi)\mathbf{x} = \mathbf{b}$. In Appendix~\ref{appendix::ADMM_linear_system} it is demonstrated that these solutions can be obtained with $\mathcal{O}(T)$ computation and memory requirement.

The residual updates are the equivalent of 
\begin{align*}
r^{(j+1)} = \left[ \begin{smallmatrix}
u^{(j+1)} - \zeta^{(j+1)} \\
v^{(j+1)} - \eta^{(j+1)} \\
x^{(j+1)} + \Psi \zeta^{(j+1)} - \textbf{1} x_0 \\
y^{(j+1)} + \Psi \eta^{(j+1)} - \textbf{1} y_0
\end{smallmatrix} \right] \  s^{(j+1)} = \left[ \begin{smallmatrix}
\rho_1 ( \zeta^{(j)} - \zeta^{(j+1)} ) \\
\rho_2 ( \eta^{(j)} - \eta^{(j+1)} ) \\
\rho_3 \Psi (\zeta^{(j+1)} - \zeta^{(j)} ) \\
\rho_4 \Psi (\eta^{(j+1)} - \eta^{(j)} )
\end{smallmatrix} \right]
\end{align*}
which scale linearly with $T$ and have no additional memory requirement (multiplication by $\Psi$ is the equivalent of a cumulative sum). Finally, update (\ref{subequation::lambda_update}) is equivalent to
\begin{align*}
\lambda^{(j+1)} := \lambda^{(j)} + r^{(j+1)}.
\end{align*}
The overall scaling properties w.r.t.\ horizon length $T$ of the computation of each iteration are summarised in Table 1\footnote{Note that the scaling properties presented here are an improvement relative to those presented in \cite{EastTCST} and \cite{EastCSL} due to the result of Appendix \ref{appendix::ADMM_linear_system}.}, and the memory requirement of the algorithm is $\mathcal{O}(T)$, as only two bandwidth-2 matrices (defined in Appendix \ref{appendix::ADMM_linear_system}) and the variables themselves require storage. The ADMM iteration (\ref{equation::ADMM_iteration}) is a particular case of `Generalized ADMM', for which it was demonstrated in \cite{Adona2017} that $\mathcal{O}(1/\epsilon )$ iterations are required to meet criterion (\ref{equation::termination_criterion}) for a given problem. To the best of the authors' knowledge there are no results that demonstrate how the iteration complexity varies with horizon length (i.e. problem size), but numerical studies using ADMM for the PHEV energy management problem suggest that the number of iterations required for a given tolerance $\epsilon$ is independent of horizon length \cite[\S 5-C]{EastTCST}.

\begin{table}
\begin{center}
\caption{ Summary of the complexity of each of the ADMM variable updates w.r.t.\ horizon length $T$.}
\label{table::ADMM_complexity}
\begin{tabular}{c | c c c c c c c c c c}
$\mathcal{O}(\cdot )$ & $(u , v )$ & $x$ & $y$ & $\zeta$ & $\eta$ & $r$ & $s$ & $\lambda$ \\ \hline
Parallel & 1 & T & T & T & T & T & T & 1  \\
Sequential & T & T & T & T & T & T & T & T 
\end{tabular}
\end{center}
\end{table}

\section{Numerical Experiments}\label{section::numerical_experiments}

This section details the numerical experiments used to validate the performance of the convex optimal control formulation. The parameters of the model used for simulation are provided in Table \ref{table::parameters}.

\begin{table}
\centering
\caption{Model Parameters for a Typical Passenger Vehicle \label{table::parameters}}
\begin{tabular}{c | c}
Parameter & Value \\ \hline
Vehicle mass $(M)$ & 1900 kg \\
Drag coefficient $(C_d)$ & 0.27 \\
Air density $(\rho_a)$ & 1.225 kg/m$^3$\\
Rolling resistance coefficient $(C_r)$ & 0.015 \\
Acceleration due to gravity $(g)$ & 9.81 m/s$^2$ \\
Motor torque limits $(\underline{T}, \overline{T})$ & $\pm$ 250 Nm \\
Battery open-circuit voltage $(V)$ & 300 V \\
Battery open-circuit resistance $(R)$ & 0.1 $\Omega$ \\
Battery power limits $(\underline{P}, \overline{P})$ & $\pm$ 70 kW \\
Battery energy limits $(\underline{x},\overline{x})$ & (0,80) MJ \\
Supercapacitor energy limits $(\underline{y},\overline{y})$ & (0, 1.08) MJ
\end{tabular}
\end{table}

\subsection{Vehicle \& Driver Model}

Figure \ref{figure::velocity} shows velocity and road gradient trajectories used to generate optimization scenarios. These are taken from 49 instances of real drive-test data on a single route, each of which was simulated individually.
\begin{figure}
\begin{center}
\includegraphics[scale=1]{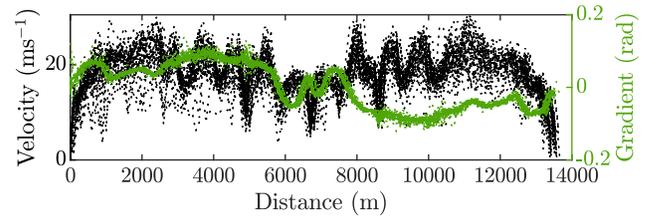}
\caption{Velocity and gradient data against distance (figure taken from \cite{EastCSL}) \label{figure::velocity}}
\end{center}
\end{figure} 
Power demand trajectories were generated from the longitudinal model
$$
d_t = (M \dot{v}_t + \frac{1}{2} \rho_a v_t^2 C_d A + C_r M g \cos \theta_t + M g \sin \theta _t ) v_t,
$$
where at sample $t$, $v_t \in \mathbb{R}$ is the velocity, $\dot{v}_t \in \mathbb{R}$ is the acceleration (obtained using central difference numerical differentiation with a sample period of $1 \ $s), and $\theta_t \in \mathbb{R}$ is the road gradient.

\subsection{Powertrain Model}

The vehicle was modelled with a single-speed transmission so that the rotational speed of the motor, $\omega_m \in \mathbb{R}^T$, was calculated at each timestep from $\omega_{m,t} = \frac{v_t}{r_w r_d} \ \forall t \in \{0, \dots, T-1 \}$, where $r_w \in \mathbb{R}$ is the effective radius of the wheel and $r_d\in \mathbb{R}$ is the final drive ratio of the transmission. The losses in the motor were modelled as a quadratic function mapping the output power, $m_t$, to motor input power, $g_t(u_t) + f_t(v_t)$, with parameters dependent on the motor speed, $\omega_{m,t}$: 
$$
g_t(u_t) + f_t(v_t ) := \beta_{2}(\omega_{m,t}) m_t^2 + \beta_{1}(\omega_{m,t}) m_t + \beta_{0}(\omega_{m,t})
$$
where $\beta_{2}(\omega_{m,t}) > 0 \ \forall \omega_{m,t}$ (this is a common method for approximating the motor loss map, e.g. \cite[\S II. B.]{Hu2019} \cite[\S 7.5]{MURGOVSKI2012106} \cite[\S 2.5]{Nesch2014}). It was assumed that all drivetrain components other than the motor were 100\% efficient, so that the inverse powertrain losses were modelled using the sampled coefficients, $\beta_{i,t} = \beta_i(\omega_{m,t}) \ \forall i \in \{0,1,2\}$, for each sample of $\omega_{m,t}$ as
$$
h_t^{-1}(m_t) := \beta_{2,t} m_t^2 + \beta_{1,t} m_t + \beta_{0,t} \quad \forall t \in \{0, \dots, T-1 \},
$$ 
which is invertible on the domain $\left[ -\frac{\beta_{1,k} }{2 \beta_{2,k}  }, \infty \right]$ and range $ \left[ \beta_{0,k} - \frac{\beta_{1,k}^2}{4 \beta_{2,k}} , \infty \right]$. As a result the powertrain loss function,
$$
h_t(x) := \frac{-\beta_{1,t} + \sqrt{ \beta_{1,t}^2 - 4 \beta_{2,t}(\beta_{0,t} - x)}}{2 \beta_{2,t}},
$$
satisfies assumptions \ref{assumption::increasing} and \ref{assumption::surjective}. The motor was also subject to upper and lower bounds on power due to its lower and upper torque limits, $\underline{T}$ and $\overline{T}$, so the overall power limits were
$$
\underline{e}_k := \max \left\{ \beta_{0,k} - \frac{\beta_{1,k}^2}{4 \beta_{2,k}}, \underline{T} \omega_{m,k} \right\}, \quad \overline{e}_k := \overline{T} \omega_{m,k}.
$$

\subsection{Battery \& Model}\label{subsection::battery_model}

The battery was modelled as an equivalent circuit with constant internal resistance, $R$, and open circuit voltage, $V$ (the assumption that these are constant parameters  holds for modest changes in state-of-charge and is used in similar energy management formulations \cite[\S 7.1]{MURGOVSKI2012106}, \cite[\S 2.7]{Nesch2014}), so that
$$
g_t(u_t) := \frac{V^2 - (V - \frac{2Ru_t}{V} )^2}{4 R}\quad \forall t \in \{0, \dots, T-1 \},
$$
which satisfies Assumptions \ref{assumption::increasing}, \ref{assumption::concave}, and \ref{assumption::surjective} on the domain $\left[ -\infty , \frac{V^2}{2R} \right]$ and range $\left[ -\infty , \frac{V^2}{4 R} \right]$. The linearization point was set at $\hat{u}_t = 0 \ \forall t \in \{0, \dots , T-1 \}$.

The battery was subject to upper and lower bounds on power, $\overline{P}$ and $\underline{P}$ (that equate to current limits under the assumption of constant battery voltage), so that 
$$
\underline{u}_t := \underline{P} \quad \overline{u}_t := \min \left\{  \overline{P},  \frac{V^2}{2R} \right\}
$$
$\forall t \in \{0, \dots, T-1\}$, and the battery upper limit was set to 22 kWh (80 MJ) to model a typical electric vehicle battery capacity (e.g. 2015 Renault Fluence ZE \cite[Table 2]{MAHMOUDZADEHANDWARI2017414}). It was assumed that the battery's power electronics (e.g. DC-DC converter) were 100\% efficient.

\subsection{Supercapacitor Model}\label{subsection::supercap_model}

It was assumed that the supercapacitor and associated power electronics were 100\% efficient so that
$$
f_t(v_t) = v_t \quad \forall t \in \{0, \dots , T-1 \},
$$
which trivially satisfies Assumptions \ref{assumption::increasing}, \ref{assumption::concave}, and \ref{assumption::surjective} (and does not require linearization, so the  choice of $\hat{v}_t$ is arbitrary). The power limits were infinite so that $\underline{v}_t = - \infty$ and $\overline{v}_t = \infty$ $\forall t \in \{0, \dots, T-1\}$. There are currently no commercially available battery/supercapacitor electric vehicles, so the supercapacitor energy limit was set at 300 Wh (1.08 MJ) to reflect the parameters used in similar studies (441.5 Wh was used in \cite[Table 2]{WIECZOREK2017222}; 203 Wh in \cite[\S II.B.]{Shen2015}; and 0.8 MJ in \cite[Table IV]{Romaus2010}).

\subsection{ADMM Parameters}\label{subsection::parameters}

The $\rho$ parameters detailed in Section \ref{section::algorithm} require tuning. A two-dimensional parameter search similar to that detailed in \cite[\S V-B]{EastTCST} was used to optimize the parameters $\rho_1$ and $\rho_3$, with $\rho_2 = \rho_1$ and $\rho_4 = \rho_3$ as these parameters correspond to constraints of similar magnitude. The chosen values, $\rho_1 = 5 \times 10^{-5}$ and $\rho_3 = 1 \times 10^{-8}$, were used for all simulations (the results in \cite{EastTCST} and \cite{EastCSL} suggest that the $\rho$ values are tuned to hardware characteristics, and fixed values perform well across a diversity of drive cycles).

The termination criterion (\ref{equation::termination_criterion}) enforces an upper bound on a measure of constraint violation ($\| r^{(j+1)} \|$) and sub-optimality ($\| s^{(j+1)} \|$). In the experiments presented here the termination criterion $\epsilon = 100$ was used, which can be loosely interpreted as a 0.1\% upper bound on solution error (as $u$ and $v$ took values of the order of magnitude $10^5$). The solutions obtained using CVX and ADMM were indistinguishable using this criterion.

\subsection{Control Algorithms}\label{subsection::algorithms}

Each of the 49 test-drives shown in Figure \ref{figure::velocity} was simulated using three alternative power allocation methods:
\begin{enumerate}
\item \textit{All-battery:} All positive (and negative power) was delivered from (to) the battery, unless the upper bound on the battery energy was active and the power demand was negative, in which case the excess power was delivered by the brakes.
\item \textit{Low-pass Filter:} A first order low-pass filter with a bandwidth of $0.01$ Hz was used to separate the power demand frequencies. The filtered signal was allocated to the battery, and the remaining power demand was allocated to the supercapacitor, unless the supercapacitor limits were active, in which case the excess power was also delivered by the battery. 
\item \textit{Optimal:} The battery and supercapacitor were controlled using the optimal controls obtained from the solution of (\ref{equation::optimal_control_problem_convex}) in open-loop (no uncertainty was modelled in the driver behaviour predictions). The solution was obtained for each journey using both ADMM and CVX to determine the relative computational performance. The ADMM algorithm was programmed in Matlab, the default solver and tolerance was used for CVX, and a 2.60GHz Intel Core i7-9750H CPU was used for both. 
\end{enumerate}
\subsection{Results}

Figure \ref{figure::results2} shows the battery power, supercapacitor power, battery energy, and supercapacitor energy obtained using each of the algorithms detailed in Section \ref{subsection::algorithms} for a single journey. Qualitatively, it can be seen that the low-pass filter reduces the amplitude and frequency of the peaks in the battery control signal relative to the all-battery controller, whilst the optimal controls obtained from the solution of (\ref{equation::optimal_control_problem_convex_2}) result in a piecewise constant battery control signal (this is explained further in Appendix \ref{appendix::analysis_of_optimization_problem2}). The optimal solution suggests that it is challenging to approximate the optimal controls using a linear filter, as the presence of both hard discontinuities and periods of constant output place conflicting requirements on the bandwidth of the filter. The battery power constraint is violated for the all-battery mode as the battery is the only power source, but it is also violated using the low pass filter. In particular, it is violated at two large peaks at $\sim$50 s and $\sim$200s, where the supercapacitor is completely empty and the battery is forced to deliver the positive demand power. Conversely, the battery power constraint is satisfied at all times using the optimal controller, for which the only hard constraints that are active are the upper and lower limits on supercapacitor energy. It was found that by tightening the battery power or energy limits to ensure that they were active at the solution, the problem generally becomes infeasible.
\begin{figure}
\begin{center}
\includegraphics[scale=1]{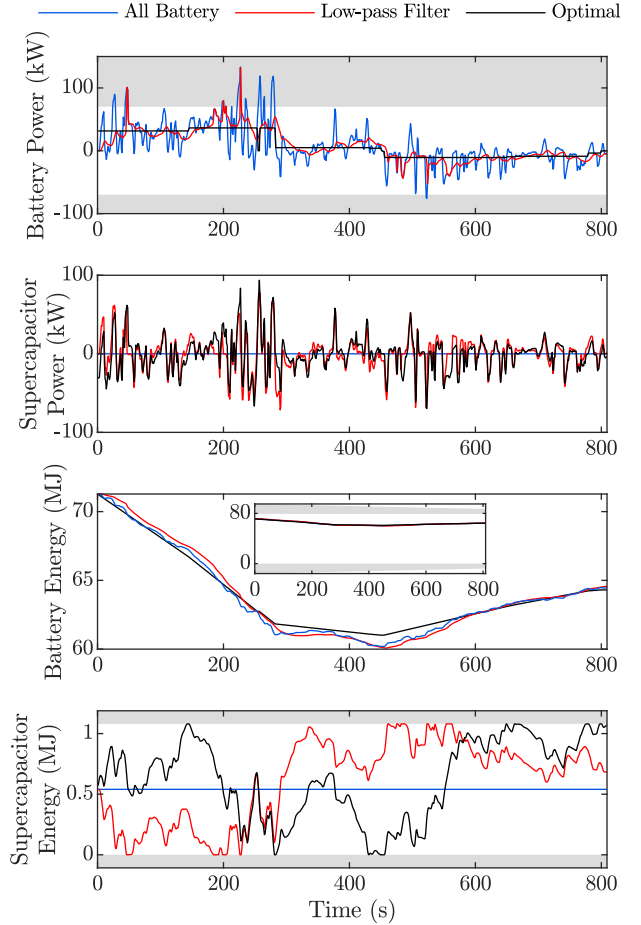}
\caption{Trajectories for battery power, supercapacitor power, battery energy, and supercapacitor energy for a single journey using all-battery mode, a low-pass filter, and optimal controls. Hard constraints are shown in grey areas. \label{figure::results2}}
\end{center}
\end{figure} 

An aim of the energy management optimization was to minimize battery degradation, a complex phenomenon caused by a multitude of factors (see \cite{BARRE2013680} for a comprehensive review of lithium-ion battery ageing mechanisms). There are a range of explicit models of battery degradation (for a review of modelling methods see \cite{Reniers_2019}), but these typically model the battery at a level that is inconsistent with the resolution of the simulations performed here (e.g. the current distribution on a cellular level would depend on the battery management system, which is not modelled in this paper). Therefore, three metrics of battery power were instead used as approximate measures of battery degradation: the Root-Mean-Squared (RMS) battery power, RMS($u$), the peak battery power, $\max | u|$, and total power throughput, $\sum |u |$. Additionally, the total energy consumption, $\sum (u + v)$, was used to determine relative efficiency of each control method (this was the optimization objective in (\ref{equation::optimal_control_problem_convex})). Note that RMS($u$), $\max |u|$, and $\sum |u|$ could also be included as objectives in a convex optimization problem, but the results show that the proposed objective function is a good heuristic for minimizing all four quantities. 
Table \ref{table::results1} shows each of these measures for the trajectories in Figure \ref{figure::results2}, and it can be seen that the optimal controller provides a significant improvement over the low-pass filter for all three degradation metrics and energy consumption. In particular, the low-pass filter provides almost no reduction in peak battery power due to the instances where a high positive power is demanded and the supercapacitor is empty, whereas the optimal controller reduces the peak battery power from 113.3 kW to 36.4 kW. This clearly demonstrates the benefits of a predictive controller that can ensure the supercapacitor has sufficient charge available for high power events. Furthermore, the RMS battery power and total battery throughput are also significantly reduced using the optimal controller, and the supercapacitor state constraints are only active for a small fraction of the journey. This suggests that the filter is inherently sub-optimal, even when detrimental control decisions are not being forced by the state constraints.

\begin{table}
\begin{center}
\caption{Approximate measures of battery degradation for the trajectories shown in Figure \ref{figure::results2}. The percentage values are the improvements relative to the all-battery baseline.}
\label{table::results1}
\begin{tabular}{c | c c c }
 & All Battery & Low-pass Filter & Optimal \\ \hline
RMS($u$) (kW) & 31.8 & 25.2 (-20.8\%) & 21.0 (-33.9\%) \\
 $\max |u|$ (kW) & 133.3 & 132.1 (-0.9\%) & 36.4 (-72.7\%) \\
 $\sum |u|$ (MJ) & 17.9 & 15.9 (-11.3\%) & 13.6 (-24.2\%) \\
 $\sum (u + v)$ (MJ) & 6.8  & 6.6 (-3.7\%) & 6.4 (-5.5\%)
\end{tabular}
\end{center}
\end{table}

Figure \ref{figure::results1} shows the RMS battery power, peak battery power, battery power throughput, and energy consumption for each control method on all 49 journeys, and the averages are summarized in Table \ref{table::Controller_results}. It can be clearly seen that the optimal controller provides a significant and consistent improvement over the low-pass filter across all four metrics and every journey. 

High temperatures and thermal gradients have been identified as factors contributing to battery degradation \cite[\S 3]{TOMASZEWSKA2019100011}, and the optimal controller provides the greatest overall reduction in peak current, which will have an impact on reducing both battery temperature (there will be a lag between heat being generated within individual cells and being sensed by the cooling system) and temperature gradients within each cell (the increased temperature will initially be localised to the core and/or terminals). Conversely, for a significant number of journeys the low-pass filter provides no perceptible reduction in peak battery power, as shown for the journey in Figure~\ref{figure::results2}. The optimal controller also significantly reduces the RMS battery power, which will also reduce thermal degradation, and total power throughput, which will reduce degradation from battery cycling. Finally, the optimal controller also increases the efficiency of the powertrain over the low-pass filter, from a reduction of 3.8\% to 5.7\% relative to the all battery mode, providing an increase in the range available to the vehicle from full charge.

\begin{figure}
\begin{center}
\includegraphics[scale=1]{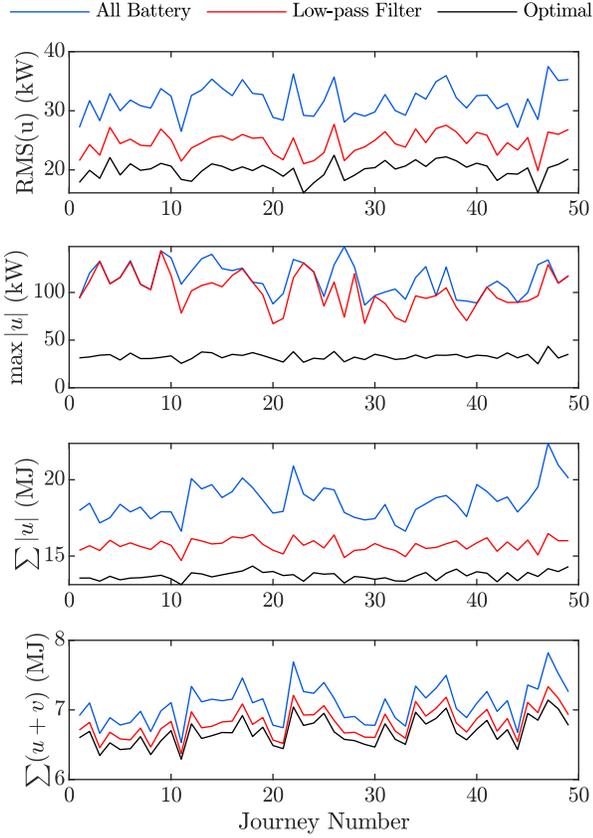}
\caption{Approximate measures of battery degradation for all 49 journeys, using all-battery control, the low-pass filter, and optimal controls.  \label{figure::results1}}
\end{center}
\end{figure} 

\begin{table}
\begin{center}
\caption{Averages of the approximate measures of battery degradation for all 49 journeys and all control methods. The stated percentages are the average of the percentage improvements relative to the all-battery baseline.}
\label{table::Controller_results}
\begin{tabular}{c | c c c }
 & All Battery & Low-Pass Filter & Optimal \\ \hline
$\overline{\text{RMS}(u)}$ (kW) & 31.7 & 24.6 (-22.5\%) & 20.0 (-36.8\%) \\
 $\overline{\max | u |}$ (kW) & 114.1 & 101.5 (-11.0\%) & 32.7 (-71.4\%) \\
 $\overline{\sum | u |}$ (MJ) & 18.6 & 15.7 (-15.6\%) & 13.7 (-26.4\%) \\
 $\overline{\sum (u + v)}$ (MJ) & 7.1 & 6.8 (-3.8\%) & 6.7 (-5.7\%)
\end{tabular}
\end{center}
\end{table}

Figure \ref{figure::results4} shows histograms of the horizon lengths of the journeys and the solution times using ADMM and CVX. The average horizon length of $T$ was 815, with a maximum of 1003, for which the average and maximum solution times using CVX were 63$\ $s and 105$\ $s, and the average and maximum solution times using ADMM were 0.38$\ $s and 0.53$\ $s. The ADMM algorithm was implemented sequentially using Matlab code in these experiments, and a compiled implementation where the combined $(u,v)$ updates are performed in parallel will improve the absolute performance further. 

\begin{figure}
\begin{center}
\includegraphics[scale=1]{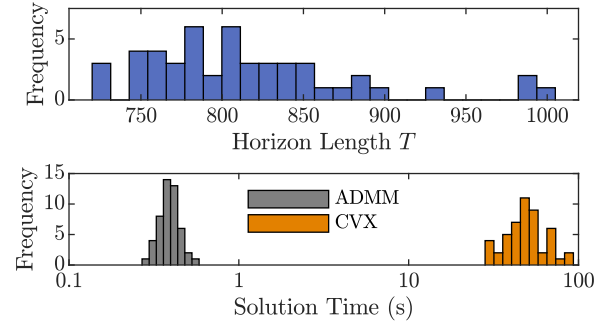}
\caption{Histograms of horizon length, $T$, and solution time using ADMM and CVX for all 49 journeys. \label{figure::results4}}
\end{center}
\end{figure} 

These results have important implications for electric vehicle powertrain control and design. The speed of computation suggests that the ADMM algorithm is a promising candidate for a real-time, online, receding-horizon MPC implementation, which could significantly reduce the battery degradation and energy consumption characteristics of a given electric powertrain design. Furthermore, the algorithm could also be used to determine the optimal size of the powertrain components (a problem considered in \cite{Song2015} and \cite{Masih-Tehrani2013}). In this case, the speed of computation could allow a brute-force approach, where every possible combination of a discrete set of powertrain parameters is evaluated against a set of candidate drive-cycles.

One of the limitations of the proposed approach is that it does not address uncertainty in the predictions of driver behaviour, and it is possible that sufficiently inaccurate predictions could be generated so that the performance of optimization-based controllers becomes worse than a low-pass filter. It is, however, worth highlighting that the convex formulation permits the use of scenario MPC (e.g. \cite{SCHILDBACH20143009}) to explicitly consider the uncertainty in future driver behaviour by optimizing over multiple predictions, possibly taken from samples of previous velocity trajectories. A systematic investigation of the robustness of the proposed method to prediction errors is left for future work. 

\section{Conclusions}
This paper proposes a convex optimization framework for optimal power allocation in electric vehicles with a battery/supercapacitor hybrid storage system, considering hard power limits on all storage and powertrain components and hard limits on battery and supercapacitor storage. An ADMM algorithm is proposed for the solution of the resulting convex optimization problem that has $\mathcal{O} (T)$ computation requirement for each variable update (and which is suitable for parallelization), and $\mathcal{O}(T)$ memory requirement. The convex formulation is compared in simulations with a low-pass filtering heuristic against an all-battery baseline, and it was shown that the optimal controller significantly reduced several measures of battery degradation. Finally, it was demonstrated that the ADMM algorithm solved the optimal control problem in less than a second, even with a horizon of 1000 samples.

% use section* for acknowledgement
\section*{Acknowledgment}

We would like to thank Professor Paul Goulart and Dr Ross Drummond from the University of Oxford, UK, for helpful contributions during the planning and development of this publication. 

We would also like to thank Professor Luigi del Re and Philipp
Polterauer at Johannes Kepler University Linz, Austria, for
generously sharing the driver data used in this publication and assisting with its processing.

% if have a single appendix:
%\appendix[Proof of the Zonklar Equations]
% or
%\appendix  % for no appendix heading
% do not use \section anymore after \appendix, only \section*
% is possibly needed

% use appendices with more than one appendix
% then use \section to start each appendix
% you must declare a \section before using any
% \subsection or using \label (\appendices by itself
% starts a section numbered zero.)
%

\appendices

\section{Analysis of Problem (\ref{equation::optimal_control_problem})} \label{appendix::analysis_of_optimization_problem}
The Lagrangian equation for problem (\ref{equation::optimal_control_problem}) is
\begin{multline*}
\mathcal{L}(u,v) := u + v - \lambda_1^\top ( u - \underline{u} ) - \lambda_2^\top (\overline{u} - u ) - \lambda_3^\top ( v - \overline{v} ) \\
- \lambda_4^\top ( \overline{v} - v ) - \lambda_5^\top ( \mathbf{1} x_0 - \Psi u - \underline{x} ) - \lambda_6^\top ( \overline{x} - \mathbf{1} x_0 + \Psi u ) \\
- \lambda_7^\top ( \mathbf{1} y_0 - \Psi v - \underline{y} ) - \lambda_8^\top ( \overline{y} - \mathbf{1} y_0 + \Psi v )  \\
- \lambda_9^\top ( g(u) + f(v) - \overline{e} ) - \lambda_{10}^\top ( \overline{e} - g(u) - f(v) ),
\end{multline*}
where 
\begin{align*}
g(u) &: = (g_0 ( u_0 ), \dots , g_{T-1} ( u_{T-1} )) \\  f(v) &: = (f_0 ( v_0 ), \dots , f_{T-1} ( v_{T-1} ) ).
\end{align*}
The first-order necessary conditions for optimality imply ${\lambda_j^\star \geq 0}$ $\forall j \in \{0, \dots, 10 \}$ and $\nabla_u \mathcal{L} = 0$, $\nabla_v \mathcal{L} = 0$, so that
\begin{align*}
\lambda_{1,t}^\star - \lambda_{2,t}^\star + \sum_{i=t}^{T-1} [ \lambda_{5,i}^\star - \lambda_{6,i}^\star] + g'_t(u_t^\star ) ( \lambda_{10,t}^\star - \lambda_{9,t}^\star ) & = -1 \\ 
\lambda_{3,t}^\star - \lambda_{4,t}^\star + \sum_{i=t}^{T-1} [ \lambda_{7,i}^\star - \lambda_{8,i}^\star ] + f'_t(v_t^\star ) ( \lambda_{10,t}^\star - \lambda_{9,t}^\star ) & = -1
\end{align*}
$\forall t \in \{0,\dots,T-1 \}$. Suppose that the constraint $\overline{e}_t - g_t (u_t) - f_t ( v_t ) \geq 0$ is active for some $t$, then $\lambda_{10,t} > 0$, $\lambda_{9,t} = 0$, and
\begin{subequations}
\begin{align}
g'_t (u^\star_t ) \lambda_{10,t}^\star = -1 + \lambda_{1,t}^\star - \lambda_{2,t}^\star - \sum_{i=t}^{T-1} [ \lambda_{5,i}^\star - \lambda_{6,i}^\star ],\label{equation::KKTu} \\
f'_t (v^\star_t ) \lambda_{10,t}^\star = -1 + \lambda_{3,t}^\star - \lambda_{4,t}^\star - \sum_{i=t}^{T-1} [ \lambda_{7,i}^\star - \lambda_{8,i}^\star ]. \label{equation::KKTv}
\end{align}
\end{subequations}
Assumption \ref{assumption::increasing} implies that $g'_t (u^\star ) > 0$ and $f'_t (v^\star ) > 0$, so $g'_t ( u^\star_t ) \lambda_{10,t}^\star > 0$ and $f'_t ( v^\star_t ) \lambda_{10,t}^\star > 0$. In the case where the bounds on battery and supercapacitor energy and power are not considered (i.e. $\lambda_j^\star=0 \ \forall j \in \{1,\dots,6\}$), this contradicts (\ref{equation::KKTu}) and (\ref{equation::KKTv}) so the constraint $\overline{e}_t - g_t (u_t) - f_t ( v_t ) \geq 0$ cannot be active at the solution. In the case where these bounds are included in the problem formulation, the same conclusion cannot be reached as $\lambda_{1,t}^\star$ and/or $\lambda_{6,i}^\star$ for some $i \in \{t, \dots, {T-1} \}$ may be nonzero.

\section{Piecewise Constant Solution of  (\ref{equation::optimal_control_problem_convex_2})} \label{appendix::analysis_of_optimization_problem2}

A similar approach to that used in Appendix \ref{appendix::analysis_of_optimization_problem} can be used to show that, under the assumption that the constraints on battery power are inactive (i.e. $\lambda_{1,t}=0$ and $\lambda_{2,t}=0$ $\forall t$) and that the upper constraint on powertrain power is inactive (i.e. $\lambda_{10,t}=0 \ \forall t$), then the solution to (\ref{equation::optimal_control_problem_convex}) satisfies
\begin{subequations}
\begin{align}
- g'_t (u^\star_t ) \lambda_{9,t}^\star = -1 - \sum_{i=t}^{T-1} [ \lambda_{5,i}^\star - \lambda_{6,i}^\star ],\label{equation::KKTu2} \\
- f'_t (v^\star_t ) \lambda_{9,t}^\star  = -1 - \sum_{i=t}^{T-1} [ \lambda_{7,i}^\star - \lambda_{8,i}^\star ]. \label{equation::KKTv2}
\end{align}
\end{subequations}
Consider a set $\{\underline{t}, \dots, \overline{t} \}$ where the upper and lower bounds on $x$ and $y$ are inactive, i.e. $\lambda_{5,t}^\star = 0$, $\lambda_{6,t}^\star = 0$, $\lambda_{7,t}^\star = 0$, and $\lambda_{8,t}^\star = 0$ $\forall t \in \{\underline{t}, \dots, \overline{t} \}$, then $-1 - \sum_{i=t}^{T-1} [ \lambda_{5,i}^\star - \lambda_{6,i}^\star ] = c_1$ and $-1 - \sum_{i=t}^{T-1} [ \lambda_{7,i}^\star - \lambda_{8,i}^\star ] = c_2$ $\forall t \in \{\underline{t}, \dots, \overline{t} \}$. Therefore, $u_t^\star$, $v_t^\star$, and $\lambda_{9,t}^\star$ are given by
\begin{align*}
- g'_t (u^\star_t ) \lambda_{9,t}^\star = c_1 \quad \textrm{and} \quad -f'_t (v^\star_t ) \lambda_{9,t}^\star = c_2.
\end{align*}
For the system models presented in Section \ref{section::numerical_experiments}, $f'_t(v_t) =1 $ and $g_t'(u_t) = 1 - \frac{2R u _t}{V^2} \ \forall t \in \{0,\dots, T-1 \} $, so $\lambda_{9,t}^\star = -c_2 \ \forall t \in \{ \underline{t}, \dots , \overline{t} \}$, and
$$
u_t = \frac{V^2}{2 R} \left(1 - \frac{c_1}{c_2} \right) \ \forall t \in \{ \underline{t}, \dots , \overline{t} \},
$$
which implies that the optimal control input is constant on the interval $\{ \underline{t},\dots,\bar{t} \}$.

\section{Combined $u$ and $v$ update} \label{appendix::combined_u_v_algorithm}
The optimization problem in (\ref{subequation::u_v_update}) is equivalent to
\begin{equation}\label{equation::u_v_update2}
\begin{aligned}
\underset{(u_t , v_t )}{\textrm{argmin}} \ & \frac{\rho_1}{2} (u_t - \zeta_t^{(j)} + \lambda_{1,t}^{(j)})^2  + \frac{\rho_2}{2} ( v_t - \eta_t^{(j)} + \lambda_{2,t}^{(j)})^2 \\
\text{s.t.} \ &  \underline{u}_t \leq u_t \leq \overline{u}_t, \quad \underline{v}_t \leq v_t \leq \overline{v}_t, \\
& \hat{e}_t \leq g_t(u_t) + f_t (v_t), \quad \hat{g}_t (u_t) + \hat{f}_t (v_t ) \leq \overline{e}_t,
\end{aligned}
\end{equation}
which is a convex (quadratic) inequality constrained optimization problem, and Figure \ref{figure::set_C_t} shows the constraint set for an illustrative example. A rigorous treatment of the proposed approach is provided below, but the principle is that there are three candidate solutions that can be obtained from simpler optimization problems: (a) problem (\ref{equation::u_v_update2}) with the constraints $\hat{e}_t \leq g_t(u_t) + f_t (v_t)$ and $\hat{g}_t (u_t) + \hat{f}_t (v_t ) \leq \overline{e}_t$ discarded, (b) problem (\ref{equation::u_v_update2}) with equality constraint $\hat{g}_t (u_t) + \hat{f}_t (v_t ) = \overline{e}_t$, (c) problem (\ref{equation::u_v_update2}) with equality constraint $\hat{e}_t = g_t(u_t) + f_t (v_t)$. Problems (a) and (b) have analytical solutions, and (c) reduces to a one-dimensional problem that can be solved numerically. Each of these cases will now be considered in detail; it is assumed throughout that (\ref{equation::u_v_update2}) is feasible.
\begin{figure}
\begin{center}
\begin{tikzpicture}
\def\uhat{-1.5E5}
\def\ghat{\uhat - \R*\uhat*\uhat/\V^2}
\def\gdashhat{1 - 2*\R*\uhat/\V^2}
\def\a{\R/\V/\V}
\def\b{\gdashhat - 1}
\def\c{-1.5E5 - (1.0E5) + \ghat - (\gdashhat)*(\uhat)}
\def\xright{(-(\b) + sqrt((\b)^2 - 4*(\a)*(\c)))/(2*(\a))}
\def\yright{-(\xright - \R*(\xright)^2/\V^2) - 1.5E5}
\def\xleft{(-(\b) - sqrt((\b)^2 - 4*(\a)*(\c)))/(2*(\a))}
\def\yleft{-(\xleft - \R*(\xleft)^2/\V^2) - 1.5E5}
\def\R{0.1}
\def\V{300}
\begin{axis}[width = 220pt, height = 185pt, axis lines = center, enlargelimits = true, clip = false, xlabel = {$u_t$}, ylabel = {$v_t$}, xmajorticks = false, xticklabels = {,,}, ytick = {0.5}, yticklabels = {$g_k(P_{drv,k})$},>=latex]
\addplot[domain=-8E5:4.5E5,<-] {-(300^2 - (300 - (2*0.1*x/300))^2)/(4*0.1) - 1.5E5} node [pos=0, above] {$\hat{e}_t = g_t(u_t) + f_t (v_t)$};
\addplot[domain=-6.5E5:3.5E5, dashed] {-(x - \R*x^2/(\V^2)) + 1.0E5};
\addplot[domain=-6E5:1E5] {6E5} node [pos=1, right] {$v_t = \overline{v}_t$};
\addplot[domain=-5.0E5:3.5E5] {-2.5E5}  node [pos=0, left] {$v_t = \underline{v}_t$};
\addplot[domain=-4.5E5:0.5E5, name path=C, opacity=0] {6E5};
\addplot[domain=-4.5E5:0.5E5, name path=D, opacity=0] {-2.5E5};
\addplot [mark=none, name path=E] coordinates {(-4.5E5, 8.5E5) (-4.5E5, -3.5E5)} node [pos=1, below] {$u_t = \underline{u}_t$};
\addplot [mark=none, name path=F] coordinates {(0.5E5, 7E5) (0.5E5, -3.5E5)};
\addplot [color=black,mark=none] coordinates {(1.5E5, 7E5)} node [above] {$u_t = \overline{u}_t$};
\addplot[domain=-7.4E5:5E5,->] {-(\ghat) - (\gdashhat)*(x - \uhat) + 1E5} node [pos=1, below] {$\hat{g}_t (u_t) + \hat{f}_t ( v_t ) = \overline{e}_t$};
\addplot[domain={\xleft}:{\xright}, name path =B, opacity=0] {-(\ghat) - (\gdashhat)*(x - \uhat) + 1E5} ;
\addplot[domain={\xleft}:{\xright}, name path=A, opacity=0] {-(x - \R*x^2/(\V^2)) - 1.5E5};
%\addplot [black, mark = o] coordinates {(\uhat, {1E5 - (\ghat)} )} node [above right] {$\hat{u}_t$};
\addplot [red, opacity=0.15] fill between[of=A and B];
\addplot [green, opacity=0.1] fill between[of=C and D];
\addplot [green, opacity=0] fill between[of=E and F];
\end{axis}
\end{tikzpicture}
\caption{Illustration of constraint sets defined by $\mathcal{C}_t$ (in red) and $\mathcal{U}_t \cap \mathcal{V}_t$ (in green).  \label{figure::set_C_t}}
\bigskip
\begin{tikzpicture}
\def\opac{0.5}
\def\opaccol{1}
\def\uhat{-1.5E5}
\def\ghat{\uhat - \R*\uhat*\uhat/\V^2}
\def\gdashhat{1 - 2*\R*\uhat/\V^2}
\def\a{\R/\V/\V}
\def\b{\gdashhat - 1}
\def\c{-1.5E5 - (1.0E5) + \ghat - (\gdashhat)*(\uhat)}
\def\xright{(-(\b) + sqrt((\b)^2 - 4*(\a)*(\c)))/(2*(\a))}
\def\yright{-(\xright - \R*(\xright)^2/\V^2) - 1.5E5}
\def\xleft{(-(\b) - sqrt((\b)^2 - 4*(\a)*(\c)))/(2*(\a))}
\def\yleft{-(\xleft - \R*(\xleft)^2/\V^2) - 1.5E5}
\def\R{0.1}
\def\V{300}
\definecolor{blu}{RGB}{0,127,255}
\definecolor{ora}{RGB}{255,90,0}
\begin{axis}[width = 220pt, height = 185pt, axis lines = center, enlargelimits = true, clip = false, xlabel = {$u_t$}, ylabel = {$v_t$}, xmajorticks = false, xticklabels = {,,}, ytick = {0.5}, yticklabels = {$g_k(P_{drv,k})$},>=latex]
\addplot[domain=-8E5:-4.5E5,<-, opacity=\opac] {-(300^2 - (300 - (2*0.1*x/300))^2)/(4*0.1) - 1.5E5} node [pos=0, above] {$\hat{e}_t = g_t(u_t) + f_t (v_t)$};
\addplot[domain=0.5E5:4.5E5, opacity=\opac] {-(300^2 - (300 - (2*0.1*x/300))^2)/(4*0.1) - 1.5E5};
\addplot[domain=-6.5E5:3.5E5, dashed, opacity=\opac] {-(x - \R*x^2/(\V^2)) + 1.0E5};
\addplot[domain=-6E5:1E5, opacity=\opac] {6E5} node [pos=1, right] {$v_t = \overline{v}_t$};
\addplot[domain=-5.0E5:3.5E5, opacity=\opac] {-2.5E5}  node [pos=0, left] {$v_t = \underline{v}_t$};
\addplot[domain=-4.5E5:0.5E5, name path=C, opacity=0] {6E5};
\addplot[domain=-4.5E5:0.5E5, name path=D, opacity=0] {-2.5E5};
\addplot [mark=none, name path=E, opacity=\opac] coordinates {(-4.5E5, 8.5E5) (-4.5E5, -3.5E5)} node [pos=1, below] {$u_t = \underline{u}_t$};
\addplot [mark=none, name path=F, opacity=\opac] coordinates {(0.5E5, 7E5) (0.5E5, -3.5E5)};
\addplot [color=black,mark=none, opacity=\opac] coordinates {(1.5E5, 7E5)} node [above] {$u_t = \overline{u}_t$};
\addplot[domain=-7.4E5:5E5,->, opacity=\opac] {-(\ghat) - (\gdashhat)*(x - \uhat) + 1E5} node [pos=1, below] {$\hat{g}_t (u_t) + \hat{f}_t ( v_t ) = \overline{e}_t$};
\addplot[ora, mark =*, mark options={fill=blu, line width=1pt}, opacity=\opaccol] coordinates {(\xleft, \yleft )};
\addplot[ora, mark =*, mark options={fill=blu, line width=1pt}, opacity=\opaccol] coordinates {(\xright, \yright )};
\addplot[ora, mark =*] coordinates {(-4.5E5, {-(-4.5E5 - \R*(4.5E5)^2/(\V^2)) - 1.5E5} )};
\addplot[ora, mark =*] coordinates {(0.5E5, {-(0.5E5 - \R*(0.5E5)^2/(\V^2)) - 1.5E5} )};
\addplot[ora, mark =*, opacity=\opaccol] coordinates {({-4.8675E5}, {6E5} )};
\addplot[ora, mark =*, opacity=\opaccol] coordinates {({1.1459E5}, {-2.5E5} )};
\addplot[ora, opacity=\opaccol, line width=1pt] coordinates {(0.5E5, {-(0.5E5 - \R*(0.5E5)^2/(\V^2)) - 1.5E5 - 2E5} ) (0.5E5, {-(0.5E5 - \R*(0.5E5)^2/(\V^2)) - 1.5E5 + 1E5} )};
\addplot[ora, opacity=\opaccol, line width=1pt] coordinates {(-4.5E5, {-(-4.5E5 - \R*(4.5E5)^2/(\V^2)) - 1.5E5-2E5} ) (-4.5E5, {-(-4.5E5 - \R*(4.5E5)^2/(\V^2)) - 1.5E5 + 2E5} )};
%\addplot[ora, opacity=\opaccol, line width=1pt] coordinates {(0.5E5, {-(0.5E5 - \R*(0.5E5)^2/(\V^2)) - 1.5E5} ) (0.5E5, {-(-4.5E5 - \R*(4.5E5)^2/(\V^2)) - 1.5E5} ) (-4.5E5, {-(-4.5E5 - \R*(4.5E5)^2/(\V^2)) - 1.5E5} ) (-4.5E5, {-(0.5E5 - \R*(0.5E5)^2/(\V^2)) - 1.5E5} ) (0.5E5, {-(0.5E5 - \R*(0.5E5)^2/(\V^2)) - 1.5E5} )};
\addplot[blu, mark =*, opacity=\opaccol] coordinates {(-4.5E5,{-(\ghat) - (\gdashhat)*(-4.5E5 - \uhat) + 1E5})};
\addplot[blu, mark =*] coordinates {(0.5E5,{-(\ghat) - (\gdashhat)*(0.5E5 - \uhat) + 1E5})};
\addplot[blu, mark =*, opacity=\opaccol] coordinates {(2.4375E5,-2.5E5)};
\addplot[blu, mark =*] coordinates {(-3.9375E5,6E5)};
\addplot[blu, opacity=\opaccol, line width=1pt] coordinates {(-3.9375E5,4E5) (-3.9375E5,8E5)};
\addplot[blu, , opacity=\opaccol, line width=1pt] coordinates {(0.5E5,{-(0.5E5 - \R*(0.5E5)^2/(\V^2)) - 1.5E5 + 1E5}) (0.5E5,{-(\ghat) - (\gdashhat)*(0.5E5 - \uhat) + 1E5 + 2E5})};
%\addplot[blu, opacity=\opaccol, line width=1pt] coordinates {(0.5E5,{-(\ghat) - (\gdashhat)*(0.5E5 - \uhat) + 1E5}) (-3.9375E5,{-(\ghat) - (\gdashhat)*(0.5E5 - \uhat) + 1E5}) (-3.9375E5,6E5) (0.5E5,6E5) (0.5E5,{-(\ghat) - (\gdashhat)*(0.5E5 - \uhat) + 1E5})};
\addplot[blu, domain={\xleft}:{\xright}, name path =B, opacity=0] {-(\ghat) - (\gdashhat)*(x - \uhat) + 1E5} ;
\addplot[domain={\xleft}:{\xright}, name path =B, opacity=0] {-(\ghat) - (\gdashhat)*(x - \uhat) + 1E5} ;
\addplot[domain={\xleft}:{\xright}, name path=A, opacity=\opac] {-(x - \R*x^2/(\V^2)) - 1.5E5};
\addplot[black, domain={-4.5E5}:{0.5E5}, line width=1pt] {-(x - \R*x^2/(\V^2)) - 1.5E5};
\addplot[black, domain={-3.9375E5}:{0.5E5}, opacity=1, line width=1pt] {-(\ghat) - (\gdashhat)*(x - \uhat) + 1E5} ;
\addplot [black, opacity=0.05] fill between[of=A and B];
\addplot [black, opacity=0.05] fill between[of=C and D];
\addplot [green, opacity=0] fill between[of=E and F];
\end{axis}
\end{tikzpicture}
\caption{Illustration of constraint sets for reduced problems (\ref{equation::reduced_problem_1}) and (\ref{equation::reduced_problem_2}).  \label{figure::set_C_t_new}}
\end{center}
\end{figure}

(a) The constraints $\hat{e}_t \leq g_t(u_t) + f_t (v_t)$ and $\hat{g}_t (u_t) + \hat{f}_t (v_t ) \leq \overline{e}_t$ (i.e. $(u_t, v_t) \in \mathcal{C}_t$) are discarded, and a candidate solution $(u^{\dagger 1}_t,v^{\dagger 1}_t)$ to (\ref{equation::u_v_update2}) is obtained from
\begin{equation*}
\begin{aligned}
\underset{(u_t , v_t )}{\textrm{argmin}} \ & \frac{\rho_1}{2} (u_t - \zeta_t^{(j)} + \lambda_{1,t}^{(j)})^2  + \frac{\rho_2}{2} ( v_t - \eta_t^{(j)} + \lambda_{2,t}^{(j)})^2 \\
\text{s.t.} \ &  \underline{u}_t \leq u_t \leq \overline{u}_t, \quad \underline{v}_t \leq v_t \leq \overline{v}_t, \\
\end{aligned}
\end{equation*} 
as $u_t^{\dagger 1} = \min \{ \overline{u}_t , \max \{ \underline{u}_t, \zeta_t^{(j)} - \lambda_{1,t}^{(j)} \} \} $ and $v_t^{\dagger 1} = \min \{ \overline{v}_t ,  \max \{ \underline{u}_t , \eta_t^{(j)} - \lambda_{2,t}^{(j)} \} \}$. If the discarded constraints are satisfied by the candidate solution, then this is the actual solution to $(\ref{equation::u_v_update2})$. If not, then the solution must be further constrained by $\hat{e}_t = g_t(u_t) + f_t (v_t)$ and/or $\hat{g}_t (u_t) + \hat{f}_t (v_t ) = \overline{e}_t$. Therefore, two further candidate solutions are obtained by tightening each of $\hat{e}_t \leq g_t(u_t) + f_t (v_t)$ and $\hat{g}_t (u_t) + \hat{f}_t (v_t ) \geq \overline{e}_t$ in (\ref{equation::u_v_update2}) to equality constraints. 

(b) A candidate solution $(u^{\dagger 2}_t,v^{\dagger 2}_t)$ is obtained from
\begin{equation}\label{equation::opt_b}
\begin{aligned}
\underset{(u_t , v_t )}{\textrm{argmin}} \ & \frac{\rho_1}{2} (u_t - \zeta_t^{(j)} + \lambda_{1,t}^{(j)})^2  + \frac{\rho_2}{2} ( v_t - \eta_t^{(j)} + \lambda_{2,t}^{(j)})^2 \\
\text{s.t.} \ &  \underline{u}_t \leq u_t \leq \overline{u}_t, \quad \underline{v}_t \leq v_t \leq \overline{v}_t, \\
& \hat{e}_t \leq g_t(u_t) + f_t (v_t), \quad \hat{g}_t (u_t) + \hat{f}_t (v_t ) = \overline{e}_t.
\end{aligned}
\end{equation}

In section \ref{section::ADMM} it was specified that $\hat{u}$ and $\hat{v}$ were chosen so that $g'(\hat{u} )$ and $f'(\hat{v})$ are nonzero (and under Assumption \ref{assumption::increasing} must be positive), so $\hat{g}^{-1}$ and $\hat{f}^{-1}$ exist and are both affine and increasing.

\begin{proposition}\label{claim::claim1}
Define the set 
$$
\hat{\mathcal{U}}_t := \{u_t \in \mathbb{R} : \hat{f}_t^{-1} (\overline{e}_t - \hat{g}_t ( u_t)) \geq f_t^{-1} (\hat{e}_t - g_t ( u_t)) \},
$$ 
and define $u_t^{\cap 1} : = \inf \hat{\mathcal{U}}_t$ and $u_t^{\cap 2} : = \sup \hat{\mathcal{U}}_t$. Then 
\begin{align*}
& \{ (u_t, v_t) \in \mathbb{R}^2 : \hat{g}_t (u_t) + \hat{f}_t (v_t ) = \overline{e}_t, \hat{e}_t \leq g_t(u_t) + f_t (v_t)  \} \\
= & \{ (u_t, v_t) \in \mathbb{R}^2 : \hat{g}_t (u_t) + \hat{f}_t (v_t ) = \overline{e}_t, u_t \in [u_t^{\cap 1}, u_t^{\cap 2} ] \cap \mathbb{R} \}.
\end{align*}
\end{proposition}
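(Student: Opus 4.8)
The plan is to reduce the two-dimensional set equality to a one-dimensional statement, exploiting that both sets lie in the line $\mathcal{B}_t := \{(u_t,v_t) \in \mathbb{R}^2 : \hat{g}_t(u_t) + \hat{f}_t(v_t) = \overline{e}_t\}$, on which $v_t$ is a function of $u_t$. First I would parametrize $\mathcal{B}_t$ by $u_t$, then translate the remaining inequality $\hat{e}_t \leq g_t(u_t) + f_t(v_t)$ into the defining condition of $\hat{\mathcal{U}}_t$, and finally show that $\hat{\mathcal{U}}_t$ is a closed interval with endpoints $u_t^{\cap 1}$ and $u_t^{\cap 2}$.

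Because $\hat{g}_t$ and $\hat{f}_t$ are affine and increasing (their slopes $g_t'(\hat{u}_t)$ and $f_t'(\hat{v}_t)$ are positive by construction and Assumption~\ref{assumption::increasing}), $\hat{f}_t^{-1}$ exists on all of $\mathbb{R}$ and every point of $\mathcal{B}_t$ can be written uniquely as $v_t = \hat{f}_t^{-1}(\overline{e}_t - \hat{g}_t(u_t))$ for some $u_t \in \mathbb{R}$. Assumptions~\ref{assumption::increasing} and~\ref{assumption::surjective} make $f_t$ a bijection on $\mathbb{R}$, so the constraint $\hat{e}_t \leq g_t(u_t) + f_t(v_t)$ is equivalent to $v_t \geq f_t^{-1}(\hat{e}_t - g_t(u_t))$. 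Substituting the parametrization of $\mathcal{B}_t$ into this inequality yields precisely $\hat{f}_t^{-1}(\overline{e}_t - \hat{g}_t(u_t)) \geq f_t^{-1}(\hat{e}_t - g_t(u_t))$, i.e.\ $u_t \in \hat{\mathcal{U}}_t$. Thus a point of $\mathcal{B}_t$ satisfies $\hat{e}_t \leq g_t(u_t) + f_t(v_t)$ if and only if its $u_t$-coordinate lies in $\hat{\mathcal{U}}_t$, which reduces the claimed set equality to showing $\hat{\mathcal{U}}_t = [u_t^{\cap 1}, u_t^{\cap 2}] \cap \mathbb{R}$.

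To establish that $\hat{\mathcal{U}}_t$ is a closed interval, I would define $\phi(u_t) := \hat{f}_t^{-1}(\overline{e}_t - \hat{g}_t(u_t)) - f_t^{-1}(\hat{e}_t - g_t(u_t))$, so that $\hat{\mathcal{U}}_t = \{u_t : \phi(u_t) \geq 0\}$, and argue that $\phi$ is concave. The first term is a composition of affine maps, hence affine. For the second term, the concavity and monotonicity of $f_t$ (Assumptions~\ref{assumption::increasing}--\ref{assumption::concave}) imply that $f_t^{-1}$ is convex and increasing, while the concavity of $g_t$ makes $\hat{e}_t - g_t(u_t)$ convex; composing a convex increasing function with a convex function gives that $f_t^{-1}(\hat{e}_t - g_t(u_t))$ is convex, so its negative is concave. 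Hence $\phi$ is concave, and since a finite concave function on $\mathbb{R}$ is continuous, its superlevel set $\{\phi \geq 0\} = \hat{\mathcal{U}}_t$ is both convex and closed, i.e.\ a closed (possibly unbounded) interval whose endpoints are $\inf \hat{\mathcal{U}}_t = u_t^{\cap 1}$ and $\sup \hat{\mathcal{U}}_t = u_t^{\cap 2}$; intersecting with $\mathbb{R}$ absorbs the cases $u_t^{\cap 1} = -\infty$ or $u_t^{\cap 2} = +\infty$. Combined with the equivalence of the previous paragraph, this proves the proposition. Geometrically, the same conclusion follows from observing that $\hat{\mathcal{U}}_t$ is the image, under the linear projection $(u_t,v_t)\mapsto u_t$, of the intersection of the line $\mathcal{B}_t$ with the set $\{(u_t,v_t) : \hat{e}_t \leq g_t(u_t) + f_t(v_t)\}$ already shown to be convex in the main text. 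I expect the only non-mechanical step to be verifying the concavity of $\phi$ --- in particular correctly chaining the fact that the inverse of a concave increasing function is convex increasing with the composition rule --- since everything else is bookkeeping with the established invertibility of the loss functions.
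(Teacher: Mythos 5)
Your proof is correct and follows essentially the same route as the paper's: parametrize the line $\hat{g}_t(u_t)+\hat{f}_t(v_t)=\overline{e}_t$ by $u_t$, observe that on this line the remaining constraint becomes exactly the defining inequality of $\hat{\mathcal{U}}_t$ (an affine function dominating a convex one, the convexity of $f_t^{-1}(\hat{e}_t - g_t(u_t))$ following from Assumptions \ref{assumption::increasing}--\ref{assumption::surjective}), and conclude that $\hat{\mathcal{U}}_t$ is an interval with endpoints $u_t^{\cap 1}, u_t^{\cap 2}$. Your closedness argument via continuity of the finite concave function $\phi$ is in fact tidier than the paper's case analysis on strong convexity and the existence of intersection points, but the underlying idea is the same.
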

\begin{proof}
The set $\{(u_t,v_t) \in \mathbb{R}^2 : \hat{e}_t \leq g_t(u_t) + f_t (v_t) \}$ is convex under Assumption \ref{assumption::concave} and is the epigraph of the function $v_t =  f_t^{-1} (\hat{e}_t - g_t ( u_t))$, which therefore must also be a convex function. Additionally, $\hat{g}_t (u_t) + \hat{f}_t (v_t ) = \overline{e}_t \Leftrightarrow v_t = \hat{f}_t^{-1} (\overline{e}_t - \hat{g}_t ( u_t))$, so
\begin{align*}
& \{ (u_t, v_t) \in \mathbb{R}^2 : \hat{g}_t (u_t) + \hat{f}_t (v_t ) = \overline{e}_t, \hat{e}_t \leq g_t(u_t) + f_t (v_t)  \} \\
= & \{ (u_t, v_t) \in \mathbb{R}^2 : \hat{g}_t (u_t) + \hat{f}_t (v_t ) = \overline{e}_t, \\
& \quad \quad \quad \quad \hat{f}_t^{-1} (\overline{e}_t - \hat{g}_t ( u_t)) \geq f_t^{-1} (\hat{e}_t - g_t ( u_t))  \}.
\end{align*}
Therefore, $\hat{\mathcal{U}}_t$ defines the values of $u_t$ where an affine function is greater than or equal to a convex function, so is convex. 

Consider the illustration of $\hat{g}_t (u_t) + \hat{f}_t (v_t ) = \overline{e}_t$ and $\hat{e}_t = g_t(u_t) + f_t (v_t)$ in Figure \ref{figure::set_C_t}, and assume that the function $f_t^{-1} (\hat{e}_t - g_t ( u_t))$ is strongly convex (which is the case for the models specified in Sections \ref{subsection::battery_model} and \ref{subsection::supercap_model}). This implies that $\mathcal{U}_t$ is closed and $\hat{\mathcal{U}}_t = [u_t^{\cap 1}, u_t^{\cap 2} ]$ (i.e. $u_t^{\cap 1} \in \mathbb{R}$ and $u_t^{\cap 2} \in \mathbb{R}$ are the `lower' and `upper' intersection points of the functions $\hat{g}_t (u_t) + \hat{f}_t (v_t ) = \overline{e}_t$ and $\hat{e}_t = g_t(u_t) + f_t (v_t)$). 

Now assume that $f_t^{-1} (\hat{e}_t - g_t ( u_t))$ is not strongly convex. Under the assumptions on $f_t(\cdot)$ and $g_t(\cdot)$ it is possible to construct cases in which there are no intersection points, or only an `upper' or `lower' intersection point (e.g. if $f_t(\cdot)$ and $g_t(\cdot)$ are piecewise affine). In these cases $\hat{\mathcal{U}}_t = [u_t^{\cap 1}, u_t^{\cap 2} ]$ where $u^{\cap 1} \in \{ -\infty, \mathbb{R} \} $ and $u^{\cap 2} \in \{ \mathbb{R},  \infty \}$. 
\end{proof}

\begin{remark}\label{remark::remark1}
If $\hat{e}_t = g_t(u_t) + f_t (v_t)$ is strongly convex and twice continuously differentiable (which is the case for the models specified in Sections \ref{subsection::battery_model} and \ref{subsection::supercap_model}), then $\overline{e}_t = \hat{e}_t$ implies $u_t^{\cap 1} = u_t^{\cap 2}$, and $\mathcal{C}_t$ has a single element that is trivially the solution to (\ref{equation::u_v_update2}). This also implies that $u^{\cap 1}_t = u^\star_t$, so the linearization points $\hat{u}_t$ and $\hat{v}_t$ should 
be chosen so that the power consumed by the powertrain is zero when $\overline{e}_t = \hat{e}_t$, as this occurs when the vehicle is stationary.
\end{remark}

\begin{proposition}\label{claim::claim2}
\begin{align*}
& \{ (u_t, v_t ) \in \mathbb{R}^2 : \hat{g}_t (u_t) + \hat{f}_t (v_t ) = \overline{e}_t, a \leq v_t \leq b\} \\
= \  & \{ (u_t, v_t ) \in \mathbb{R}^2 : \hat{g}_t (u_t) + \hat{f}_t (v_t ) = \overline{e}_t, \\
& \quad \quad \quad \hat{g}^{-1}_t ( \overline{e}_t - \hat{f}_t ( b) ) \leq u_t \leq \hat{g}^{-1}_t ( \overline{e}_t - \hat{f}_t ( a) )\}
\end{align*}
\end{proposition}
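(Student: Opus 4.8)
The plan is to exploit the fact, noted immediately before Proposition~\ref{claim::claim1}, that under Assumption~\ref{assumption::increasing} the linearization points are chosen so that $g'_t(\hat{u}_t) > 0$ and $f'_t(\hat{v}_t) > 0$, whence $\hat{g}_t(\cdot)$ and $\hat{f}_t(\cdot)$ are affine and strictly increasing, and so their inverses $\hat{g}^{-1}_t(\cdot)$ and $\hat{f}^{-1}_t(\cdot)$ exist and are themselves affine and increasing. The equality constraint $\hat{g}_t(u_t) + \hat{f}_t(v_t) = \overline{e}_t$ then parametrizes a line along which $u_t$ and $v_t$ are in strictly monotone bijection, and the entire statement reduces to pushing the box constraint $a \leq v_t \leq b$ through this bijection to obtain the equivalent two-sided bound on $u_t$.

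First I would rearrange the equality as $\hat{g}_t(u_t) = \overline{e}_t - \hat{f}_t(v_t)$ and apply $\hat{g}^{-1}_t$ to both sides, giving the explicit relation $u_t = \hat{g}^{-1}_t(\overline{e}_t - \hat{f}_t(v_t))$ that holds for every $(u_t,v_t)$ in the constraint set; this single identity is what everything hinges on. Next I would track the monotonicity through the composition: applying the increasing map $\hat{f}_t$ to $a \leq v_t \leq b$ gives $\hat{f}_t(a) \leq \hat{f}_t(v_t) \leq \hat{f}_t(b)$; subtracting from $\overline{e}_t$ reverses the chain to $\overline{e}_t - \hat{f}_t(b) \leq \overline{e}_t - \hat{f}_t(v_t) \leq \overline{e}_t - \hat{f}_t(a)$; and applying the increasing map $\hat{g}^{-1}_t$ preserves the order, yielding
$$
\hat{g}^{-1}_t(\overline{e}_t - \hat{f}_t(b)) \leq u_t \leq \hat{g}^{-1}_t(\overline{e}_t - \hat{f}_t(a)).
$$
Since each of the maps $\hat{f}_t$, $x \mapsto \overline{e}_t - x$, and $\hat{g}^{-1}_t$ is a bijection, every implication in this chain is in fact an equivalence, so the two descriptions of the set coincide, establishing the claim.

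The only point requiring care is the bookkeeping on the inequality directions — specifically the single sign reversal introduced by subtracting $\hat{f}_t(v_t)$ from $\overline{e}_t$, which is what sends the upper bound $b$ on $v_t$ to the \emph{lower} bound on $u_t$ and $a$ to the \emph{upper} bound. I do not expect any genuine analytic obstacle: in contrast to Proposition~\ref{claim::claim1}, where an affine function bounded a convex one and a case analysis on the existence of intersection points was needed, here the constraint is a single affine equality, so no strong-convexity hypothesis or limiting argument is required and the equivalence holds unconditionally.
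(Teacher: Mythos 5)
Your proposal is correct and follows essentially the same route as the paper: both exploit that $\hat{g}_t$ and $\hat{f}_t$ are affine and strictly increasing (because $g'_t(\hat{u}_t),f'_t(\hat{v}_t)>0$), solve the equality constraint for $u_t=\hat{g}_t^{-1}(\overline{e}_t-\hat{f}_t(v_t))$, and push the bounds $a\leq v_t\leq b$ through the resulting monotone chain, with the single sign reversal mapping $b$ to the lower bound on $u_t$. Your explicit remark that every map in the chain is a bijection, so the implications are equivalences and the two sets genuinely coincide, is a slightly more careful rendering of what the paper leaves as "it can similarly be shown."
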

\begin{proof}
In section \ref{section::ADMM} it was specified that $\hat{u}$ and $\hat{v}$ were both chosen so that $g'(\hat{u} )$ and $f'(\hat{v} )$ were non-zero (which under Assumption \ref{assumption::increasing} implies that they are greater than zero), so $\hat{f}_t(\cdot)$ and $\hat{g}_t(\cdot)$ are both increasing, and it can then be shown that
\begin{align*}
v_t \leq b \Rightarrow & \ \overline{e}_t - \hat{f}_t ( v_t ) \geq \overline{e}_t - \hat{f}_t ( b) \\
\Rightarrow & \ u_t \geq \hat{g}^{-1}_t ( \overline{e}_t - \hat{f}_t ( b) ).
\end{align*}
It can similarly be shown that $u_t \leq \hat{g}_t^{-1} ( \overline{e}_t - \hat{g}_t ( a))$.
\end{proof}
Propositions \ref{claim::claim1} and \ref{claim::claim2} imply that (\ref{equation::opt_b}) is equivalent to
\begin{equation}\label{equation::reduced_problem_1}
\begin{aligned}
\underset{u_t }{\textrm{argmin}} \ & \frac{\rho_1}{2} (u_t - \zeta_t^{(j)} + \lambda_{1,t}^{(j)})^2 \\
& \quad + \frac{\rho_2}{2} ( \hat{f}_t^{-1} ( \overline{e}_t - \hat{g}_t ( u_t )) - \eta_t^{(j)} + \lambda_{2,t}^{(j)})^2 \\
\text{s.t.} \ &  u_t \geq \max \{ \underline{u}_t, u_t^{\cap 1}, \hat{g}^{-1}_t ( \overline{e}_t - \hat{f}_t ( \overline{v}_t ) ) \} \\
& u_t \leq \min \{ \overline{u}_t, u_t^{\cap 2}, \hat{g}^{-1}_t ( \overline{e}_t - \hat{f}_t ( \underline{v}_t ) ) \} 
\end{aligned}
\end{equation}
which is a one-dimensional constrained quadratic minimization problem for $\hat{u}_t$, with the constraint set illustrated in blue in Figure \ref{figure::set_C_t_new}. The solution to the unconstrained problem can be obtained analytically and projected onto the upper and lower bounds on $\hat{u}_t$ to obtain $u^{\dagger 2}_t$, then the corresponding value of $v_t$ can then be returned from $v^{\dagger 2}_t = \hat{f}_t^{-1} ( \overline{e}_t - \hat{g}_t ( u^{\dagger 2}_t ))$. 

(c) 
The final candidate solution $(u^{\dagger 3}_t, v^{\dagger 3}_t )$ is obtained from 
\begin{equation*}
\begin{aligned}
\underset{(u_t , v_t )}{\textrm{argmin}} \ & \frac{\rho_1}{2} (u_t - \zeta_t^{(j)} + \lambda_{1,t}^{(j)})^2  + \frac{\rho_2}{2} ( v_t - \eta_t^{(j)} + \lambda_{2,t}^{(j)})^2 \\
\text{s.t.} \ &  \underline{u}_t \leq u_t \leq \overline{u}_t, \quad \underline{v}_t \leq v_t \leq \overline{v}_t, \\
& \hat{e}_t = g_t(u_t) + f_t (v_t), \quad \hat{g}_t (u_t) + \hat{f}_t (v_t ) \leq \overline{e}_t,
\end{aligned}
\end{equation*}
which can be shown to be equivalent to
\begin{equation}\label{equation::reduced_problem_2}
\begin{aligned}
\underset{u_t}{\textrm{argmin}} \ & \frac{\rho_1}{2} (u_t - \zeta_t^{(j)} + \lambda_{1,t}^{(j)})^2  \\
& \quad + \frac{\rho_2}{2} ( f_t^{-1} ( \overline{e}_t - g_t ( u_t )) - \eta_t^{(j)} + \lambda_{2,t}^{(j)})^2 \\
\text{s.t.} \ &  u_t \geq \max \{ \underline{u}_t, u_t^{\cap 1}, \hat{g}^{-1}_t ( \overline{e}_t - \hat{f}_t ( \overline{v}_t ) ) \} \\
& u_t \leq \min \{ \overline{u}_t, u_t^{\cap 2}, \hat{g}^{-1}_t ( \overline{e}_t - \hat{f}_t ( \underline{v}_t ) ) \}
\end{aligned}
\end{equation}
using the same approach as for (b). The constraint set for (\ref{equation::reduced_problem_2}) is illustrated in red in Figure \ref{figure::set_C_t_new}. The function $f_t^{-1} ( \overline{e}_t - g_t ( u_t ))$ is nonlinear, so the cost function in (\ref{equation::reduced_problem_2}) is nonconvex in general, but a stationary point can be obtained without the inequality constraints using an iterative algorithm (e.g. Newton's method)\footnote{For the experiments detailed in Section \ref{section::numerical_experiments}, the function $f_t^{-1} ( \overline{e}_t - g_t ( u_t ))$ is quadratic, so the cost function in (\ref{equation::reduced_problem_2}) is quartic, and the stationary points can be found from the roots of a cubic equation.} and then projected onto the bounds on $u_t$. 

The stationary point of (\ref{equation::reduced_problem_2}) and $(u^{\dagger 2}_t, v^{\dagger 2}_t )$ are then evaluated against the cost function of (\ref{equation::u_v_update2}) to determine which is the minimizing argument. If the constraint $\hat{e}_t \leq g_t(u_t) + f_t (v_t)$ is strongly active at the solution, then the stationary point of (\ref{equation::reduced_problem_2}) corresponds to the global minimum of (\ref{equation::u_v_update2}). If the constraint $\hat{e}_t \leq g_t(u_t) + f_t (v_t)$ is not strongly active at the solution to (\ref{equation::u_v_update2}), then problem (\ref{equation::reduced_problem_2}) may have multiple stationary points that may not be minimal for (\ref{equation::u_v_update2}). However, in this case the constraint $\hat{g}_t (u_t) + \hat{f}_t (v_t ) \leq \overline{e}_t$ will be strongly active at the solution to (\ref{equation::u_v_update2}), so $(u^{\dagger 2}_t, v^{\dagger 2}_t )$ will be the minimizing argument of (\ref{equation::u_v_update2}). The pseudocode for update (\ref{subequation::u_v_update}) is presented in Algorithm \ref{algorithm::combined_u_v}.

\begin{algorithm}
\caption{Update (\ref{subequation::u_v_update})\label{algorithm::combined_u_v}}
\begin{algorithmic}[1]
\STATE $u_t^{\dagger 1} \gets \min \{ \overline{u}_t , \max \{ \underline{u}_t, \zeta_t^{(j)} - \lambda_{1,t}^{(j)} \} \} $ 
\STATE $v_t^{\dagger 1} \gets \min \{ \overline{v}_t ,  \max \{ \underline{u}_t , \eta_t^{(j)} - \lambda_{2,t}^{(j)} \} \}$
\IF {$(u^{\dagger 1}_t, v^{\dagger 1}_t ) \in \mathcal{C}_t$}
\STATE $(u_t,v_t)^{j+1} \gets (u^{\dagger 1}_t, v^{\dagger 1}_t )$
\ELSE
\STATE $u^{\dagger 2}_t \gets $ Solution to (\ref{equation::reduced_problem_1})
\STATE $v^{\dagger 2}_t \gets \hat{f}_t^{-1} ( \overline{e}_t - \hat{g}_t ( u^{\dagger 2}_t ))$
\STATE $u^{\dagger 3}_t \gets $ Stationary point of (\ref{equation::reduced_problem_2})
\STATE $v^{\dagger 3}_t \gets \hat{f}_t^{-1} ( \overline{e}_t - \hat{g}_t ( u^{\dagger 3}_t ))$
\STATE Evaluate $(u^{\dagger 2}_t, v^{\dagger 2}_t )$ and $(u^{\dagger 3}_t, v^{\dagger 3}_t ) $ against objective of (\ref{subequation::u_v_update})
\STATE $(u_t,v_t)^{j+1} \gets$ minimizing argument
\ENDIF 
\end{algorithmic}
\end{algorithm}

\section{Matrix Inversion in $\zeta$ and $\eta$ update.} \label{appendix::ADMM_linear_system}

\begin{proposition}
The system of linear equations 
\begin{equation}\label{equation::linear_system}
(kI + \Psi^\top \Psi ) \mathbf{x} = \mathbf{b},
\end{equation}
where $k \in \mathbb{R}_{++}$, $\mathbf{x} \in \mathbb{R}^T$, $\mathbf{b} \in \mathbb{R}^T$, $\Psi$ is a $T \times T$ lower triangular matrix of ones, and $I$ is the identity matrix, can be solved with $\mathcal{O}(T)$ computation and $\mathcal{O}(T)$ memory storage.
\end{proposition}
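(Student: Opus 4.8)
The plan is to exploit the fact that $\Psi$, being the cumulative-sum (lower-triangular-of-ones) matrix, has a \emph{banded} inverse: $\Psi^{-1}$ is the first-difference operator, i.e.\ lower bidiagonal with $1$ on the diagonal and $-1$ on the first subdiagonal. The idea is to use $\Psi^{-1}$ as a similarity transform that converts the dense coefficient matrix $M := kI + \Psi^\top\Psi$ into a tridiagonal one, for which an $\mathcal{O}(T)$ direct solver is available, and then to undo the transform using only cheap difference/cumulative-sum operations.

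First I would form $N := \Psi^{-\top} M \Psi^{-1} = I + k\,\Psi^{-\top}\Psi^{-1}$, where the cross terms collapse because $\Psi^{-\top}\Psi^\top = I$ and $\Psi\Psi^{-1}=I$. Since $\Psi^{-1}$ is bidiagonal, the product $\Psi^{-\top}\Psi^{-1}$ is symmetric \emph{tridiagonal}, and a short entrywise calculation gives its diagonal entries as $2$ (with $1$ in the final position) and its off-diagonal entries as $-1$. Hence $N$ is symmetric tridiagonal with diagonal $(1+2k,\dots,1+2k,1+k)$ and sub-/super-diagonal equal to $-k$; as $k>0$ it is strictly diagonally dominant with positive diagonal, hence positive definite. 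Rearranging the similarity identity then yields the exact factorization $M = \Psi^\top N \Psi$.

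Solving $M\mathbf{x}=\mathbf{b}$ then reduces to the three-stage system $\Psi^\top N \Psi \mathbf{x} = \mathbf{b}$, each stage costing $\mathcal{O}(T)$. I would (i) solve $\Psi^\top \mathbf{z} = \mathbf{b}$, which because $\Psi^\top$ is the reverse cumulative sum is just a reverse first difference $z_i = b_i - b_{i+1}$ (with the convention $b_{T+1}=0$); (ii) solve the tridiagonal system $N\mathbf{w}=\mathbf{z}$ with the Thomas algorithm, or equivalently with a precomputed Cholesky/$LDL^\top$ factorization of $N$ whose factors are bidiagonal; and (iii) solve $\Psi\mathbf{x}=\mathbf{w}$, which is the forward first difference $x_i = w_i - w_{i-1}$. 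Every stage needs only $\mathcal{O}(T)$ flops and $\mathcal{O}(T)$ storage, and since $k$ is fixed across the ADMM iterations the bidiagonal factors of $N$ are computed once and reused, giving the `bandwidth-2' stored matrices referred to in the main text.

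The main obstacle is the middle step: establishing that the similarity transform produces a genuinely tridiagonal $N$ (rather than merely a band matrix of larger width) and that $N$ inherits positive definiteness, so that the Thomas recursion is well defined without pivoting and the factorization is stable. Both facts follow from the explicit bidiagonal form of $\Psi^{-1}$ together with the short computation of $\Psi^{-\top}\Psi^{-1}$ above; once this structure is in hand, the two outer stages are elementary, since applying $\Psi^{\pm 1}$ and $\Psi^{\mp\top}$ amounts only to cumulative sums and first differences.
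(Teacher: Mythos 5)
Your proof is correct and follows essentially the same route as the paper's: both exploit the bidiagonal form of $\Psi^{-1}$ to collapse the dense coefficient matrix into a tridiagonal, strictly diagonally dominant (hence positive definite) one that is handled by a bandwidth-2 Cholesky/Thomas factorization, with the remaining work reduced to cumulative sums and first differences, giving $\mathcal{O}(T)$ time and memory. The only difference is cosmetic: the paper left-multiplies the system by $\Psi^{-1}(\Psi^{-1})^\top$ to obtain $(k\Psi^{-1}(\Psi^{-1})^\top + I)\mathbf{x} = \Psi^{-1}(\Psi^{-1})^\top\mathbf{b}$ and solves for $\mathbf{x}$ directly, whereas you use the congruence $kI+\Psi^\top\Psi = \Psi^\top\bigl(I + k(\Psi^{-1})^\top\Psi^{-1}\bigr)\Psi$ and a three-stage solve.
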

\begin{proof}
Firstly, note that
$$
\Psi^{-1} = \left[ \begin{smallmatrix}
1 \\ -1 & 1 \\ & \ddots & \ddots \\ & & -1 & 1
\end{smallmatrix} \right], \quad \Psi^{-1} ( \Psi^{-1} )^\top = \left[ \begin{smallmatrix}
1 & -1 \\ -1 & 2 & -1 \\ & \ddots & \ddots & \ddots \\ & & -1 & 2 & -1 \\ & & & -1 & 2
\end{smallmatrix} \right],
$$
and
\begin{align*}
& (kI + \Psi^\top \Psi ) \mathbf{x} = \mathbf{b} \\
\Leftrightarrow \ &  (k \Psi^{-1} (\Psi^{-1})^\top + I ) \mathbf{x} = \Psi^{-1} (\Psi^{-1})^\top \mathbf{b}.
\end{align*}
The matrix $(k \Psi^{-1} (\Psi^{-1})^\top + I )$ is diagonally dominant, so is positive definite from Gershgorin circle theorem. This implies that the Cholesky factorization
$L L^\top = (k \Psi^{-1} (\Psi^{-1})^\top + I )$ exists, where $L$ is a lower diagonal matrix with nonzero entries on the diagonal and first subdiagonal only. The solution to (\ref{equation::linear_system}) can therefore be obtained from
$$
\mathbf{x} = L^\top \text{\textbackslash} L \text{\textbackslash} \Psi^{-1} (\Psi^{-1})^\top \mathbf{b},
$$
where the backslash operator $(\cdot)\text{\textbackslash}(\cdot)$ indicates a backwards/forwards substitution. Multiplications by $\Psi^{-1}$ and $(\Psi^{-1})^\top$ are equivalent to difference operations, and the backwards/forwards substitutions involve bandwidth-2 Cholesky factors, so the total complexity of the operations is $\mathcal{O}(T)$. The only values that need storing in memory are the diagonal and first subdiagonal entries of $L$, which require $\mathcal{O}(T)$ storage.
\end{proof}

% Can use something like this to put references on a page
% by themselves when using endfloat and the captionsoff option.
\ifCLASSOPTIONcaptionsoff
  \newpage
\fi

% trigger a \newpage just before the given reference
% number - used to balance the columns on the last page
% adjust value as needed - may need to be readjusted if
% the document is modified later
%\IEEEtriggeratref{8}
% The "triggered" command can be changed if desired:
%\IEEEtriggercmd{\enlargethispage{-5in}}

% references section

% can use a bibliography generated by BibTeX as a .bbl file
% BibTeX documentation can be easily obtained at:
% http://www.ctan.org/tex-archive/biblio/bibtex/contrib/doc/
% The IEEEtran BibTeX style support page is at:
% http://www.michaelshell.org/tex/ieeetran/bibtex/
\bibliographystyle{IEEEtran}
% argument is your BibTeX string definitions and bibliography database(s)
%\bibliography{IEEEabrv,../bib/paper}
%
% <OR> manually copy in the resultant .bbl file
% set second argument of \begin to the number of references
% (used to reserve space for the reference number labels box)
%\bibliographystyle{ieeetr}
\bibliography{bibl} 

\end{document}